\newtheorem{theorem}{Theorem}
\newtheorem{corollary}[theorem]{Corollary}
\theoremstyle{definition}
\theoremstyle{remark}
\newtheorem{rem}{Remark}
\numberwithin{equation}{section}
\numberwithin{theorem}{section}
\numberwithin{defn}{section}
\newcommand*\diff{\mathop{}\!\mathrm{d}}
\begin{document}
\title[Multi-sum Rogers-Ramanujan type identities]
 {Multi-sum Rogers-Ramanujan type identities}

\author{Zhineng Cao and Liuquan Wang}
\address{School of Mathematics and Statistics, Wuhan University, Wuhan 430072, Hubei, People's Republic of China}
\email{zhncao@whu.edu.cn}
\address{School of Mathematics and Statistics, Wuhan University, Wuhan 430072, Hubei, People's Republic of China}
\email{wanglq@whu.edu.cn;mathlqwang@163.com}

\subjclass[2010]{11P84, 33D15, 33D60}

\keywords{Rogers-Ramanujan type identities; sum-product identities; Kanade-Russell identities; partitions; integral method}


\begin{abstract}
We use an integral method to establish a number of Rogers-Ramanujan type identities involving double and triple sums. The key step for proving such identities is to find some infinite products whose integrals over suitable contours are still infinite products. The method used here is motivated by Rosengren's proof of the Kanade-Russell identities.
\end{abstract}

\maketitle

\section{Introduction}\label{sec-intro}
The famous Rogers-Ramanujan identities assert that
\begin{align}\label{RR}
\sum_{n=0}^\infty \frac{q^{n^2}}{(q;q)_n}=\frac{1}{(q,q^4;q^5)_\infty}, \quad \sum_{n=0}^\infty \frac{q^{n(n+1)}}{(q;q)_n}=\frac{1}{(q^2,q^3;q^5)_\infty}.
\end{align}
Here and throughout this paper, we assume that $|q|<1$ for convergence and use the standard $q$-series notation
\begin{align}
(a;q)_0:=1, \quad (a;q)_n:=\prod\limits_{k=0}^{n-1}(1-aq^k), \quad (a;q)_\infty :=\prod\limits_{k=0}^\infty (1-aq^k),  \\
(a_1,\cdots,a_m;q)_n:=(a_1;q)_n\cdots (a_m;q)_n, \quad n\in \mathbb{N}\cup \{\infty\}.
\end{align}

These two sum-product identities have fascinating combinatorial interpretations, and they stimulate a number of researches on finding similar identities. One of the famous work on this direction is Slater's list \cite{Slater}, which contains 130 of such identities such as
\begin{align}
    \sum_{n=0}^\infty \frac{q^{2n^2}}{(q;q)_{2n}}&=\frac{1}{(q^2,q^3,q^4,q^5,q^{11},q^{12},q^{13},q^{14};q^{16})_\infty}, \\
    \sum_{n=0}^\infty \frac{q^{2n(n+1)}}{(q;q)_{2n+1}}&= \frac{1}{(q,q^4,q^6,q^7,q^9,q^{10},q^{12},q^{15};q^{16})_\infty}.
\end{align}
Identities similar to \eqref{RR} are called as Rogers-Ramanujan type identities.

It is natural to consider multi-sum Rogers-Ramanujan type identities. For example, the Andrews-Gordon identity (see \cite{Andrews1974,Gordon1961}), which is a generalization of \eqref{RR}, states that for positive integer $k>1$ and $1\leq i \leq k$,
\begin{align}
&\sum_{n_{k-1}\geq n_{k-2}\geq \cdots \geq n_1\geq 0} \frac{q^{n_1^2+n_2^2+\cdots+n_{k-1}^2+n_i+n_{i+1}+\cdots +n_{k-1}}}{(q;q)_{n_{k-1}-n_{k-2}}(q;q)_{n_{k-2}-n_{k-3}}\cdots (q;q)_{n_2-n_1} (q;q)_{n_1}}  \nonumber \\ &=\frac{(q^i,q^{2k+1-i},q^{2k+1};q^{2k+1})_\infty}{(q;q)_\infty}. \label{AG}
\end{align}
Bressoud \cite{Bressoud1980} provided an even modulus analog of this identity.
In a series of works (see e.g. \cite{Lepowsky-Wilson,Lepowsky-Wilson-1985}), Lepowsky and Wilson developed Lie theoretic approach to establish  Rogers-Ramanujan type identities. In particular, they showed that the Rogers-Ramanujan identities, the Andrews-Gordon identity and Bressoud's identity are closely related to the affine Kac-Moody  Lie algebra $A_1^{(1)}$. This motivates people to find similar identities by studying other Lie algebras.  See the books \cite{Lost2,Sills-book} for more historical background.

In recent years, Kanade and Russell \cite{KR-2019} searched for Rogers-Ramanujan type identities related to level 2 characters of the affine Lie algebra $A_9^{(2)}$, and they conjectured a number of such identities. Let
\begin{align}
F(u,v,w)&:=\sum_{i,j,k\geq 0} \frac{(-1)^kq^{3k(k-1)+(i+2j+3k)(i+2j+3k-1)}u^iv^jw^k}{(q;q)_i(q^4;q^4)_j(q^6;q^6)_k}, \\
G(u,v,w)&:=\sum_{i,j,k\geq 0}\frac{q^{(i+2j+3k)(i+2j+3k-1)/2+j^2}u^iv^jw^k}{(q;q)_i(q^2;q^2)_j(q^3;q^3)_k}.
\end{align}
Some of their conjectural identities are
\begin{align}
F(q,1,q^3)&=\frac{(q^3;q^{12})_\infty}{(q,q^2;q^4)_\infty}, \label{KR-conj-1} \\
F(q,q,q^6)&=\frac{1}{(q^3;q^4)_\infty (q,q^8;q^{12})_\infty}, \label{KR-conj-2} \\
G(q,q^2,q^4)&=\frac{1}{(q;q^3)_\infty (q^3,q^6,q^{11};q^{12})_\infty}, \label{KR-conj-3} \\
G(q^2,q^4,q^5)&=\frac{1}{(q^2;q^3)_\infty (q^3,q^6,q^7;q^{12})_\infty}.  \label{KR-conj-4}
\end{align}
Five of their conjectural identities on $F(u,v,w)$ as well as the identities \eqref{KR-conj-3} and \eqref{KR-conj-4} on $G(u,v,w)$ were confirmed by Bringmann, Jennings-Shaffer and Mahlburg \cite{BSM}. Later, using an integral method, Rosengren \cite{Rosengren} gave proofs to all of the nine conjectural identities on $F(u,v,w)$.

Since there are numerous Rogers-Ramanujan type identities in the literature and some of them have similar shapes, it is more convenient to group some of them together. Following the notion in \cite{Wang}, for a fixed $k$, we shall call an identity of the following shape: finite sum of
\begin{align}\label{type-defn}
\sum_{(i_1,\cdots,i_k)\in S}\frac{(-1)^{t(i_1,\cdots,i_k)}q^{Q(i_1,\cdots,i_k)}}{(q^{n_1};q^{n_1})_{i_1}\cdots (q^{n_k};q^{n_k})_{i_k}}= \prod\limits_{ (a,n)\in P} (q^{a};q^n)_\infty^{r(a,n)}
\end{align}
as a Rogers-Ramanujan type identity of {\it index} $(n_1,n_2,\cdots,n_k)$. Here $t(i_1,\cdots,i_k)$ is an integer-valued function, $Q(i_1,\cdots,i_k)$ is a rational polynomial in variables $i_1,\cdots,i_k$, $n_1,\cdots, n_k$ are positive integers with $\gcd(n_1,n_2,\cdots,n_k)=1$, $S$ is a subset of $\mathbb{Z}^k$, $P$ is a finite subset of $\mathbb{Q}^2$ and $r(a,n)$ are integer-valued functions. With this notion, we see that the identities \eqref{KR-conj-1} and \eqref{KR-conj-2} are of index $(1,4,6)$ while \eqref{KR-conj-3} and \eqref{KR-conj-4} are of index $(1,2,3)$.

There are some other identities similar to \eqref{KR-conj-1}--\eqref{KR-conj-4} in the literature. First, we can find some identities involving double sums of index $(1,2)$, $(1,3)$ and $(1,4)$. For instance,  analytical forms of two conjectural partition identities of Capparelli \cite{Capparelli} were given in the work of Kanade and Russell \cite{KR-2019} as well as the work of Kur\c{s}ung\"{o}z \cite{Kursungoz}. These two identities are all of index $(1,3)$ and one of them is
\begin{align}\label{Capparelli-eq}
\sum_{i,j\geq 0}\frac{q^{2i^2+6ij+6j^2}}{(q;q)_i(q^3;q^3)_j}&=\frac{1}{(q^2,q^3,q^9,q^{10};q^{12})_\infty}.
\end{align}
Kur\c{s}ung\"{o}z \cite{Kursungoz} also found four identities of index $(1,4)$. Five conjectural identities of index $(1,3)$ were presented in \cite[Conjecture 6.1]{Kursungoz-AnnComb} such as
\begin{align}
\sum_{i,j\geq 0}\frac{q^{i^2+3j^2+3ij}}{(q;q)_i(q^3;q^3)_j}=\frac{1}{(q,q^3,q^6,q^8;q^9)_\infty}. \label{K-conj-1}
\end{align}
They are based on the work of Kanade and Russell \cite{KR-2015} and so far remain open.

Andrews \cite{Andrews2019} and Takigiku and Tsuchioka \cite{Takigiku-2019} provided some identities of index $(1,2)$, which can be proved by summing over one of the index first and then summing over the second index. Uncu and Zudilin \cite{Uncu-Zudilin} presented two identities of index $(1,2)$ and mentioned that they can be explained as instances of Bressoud's identities \cite{Bressoud1979}. Berkovich and Uncu \cite{Berkovich} proved an identity of index $(1,3)$. In 2021,  Andrews and Uncu \cite{Andrews-Uncu} proved an identity of index $(1,3)$ and further conjectured that \cite[Conjecture 1.2]{Andrews-Uncu}
\begin{align}\label{AU-conj}
\sum_{i,j\geq 0}\frac{(-1)^jq^{3j(3j+1)/2+i^2+3ij+i+j}}{(q;q)_i(q^3;q^3)_j}=\frac{1}{(q^2,q^3;q^6)_\infty}.
\end{align}
This was first proved  by Chern \cite{Chern} and then by Wang \cite{Wang}. Through the integral method, Wang \cite{Wang} also provided new proofs to some other double sum Rogers-Ramanujan type identities of indexes $(1,2)$, $(1,3)$ and $(1,4)$.

As for identities involving triple sums or quadruple sums, besides the Kanade-Russell identities of indexes $(1,2,3)$ and $(1,4,6)$ such as \eqref{KR-conj-1}--\eqref{KR-conj-4}, there are other known identities of indexes $(1,1,6)$, $(1,2,2)$, $(1,2,3)$, $(1,1,1,2)$, $(1,2,2,4)$ and $(1,2,3,4)$. For example, Rosengren \cite[Eq.\ (5.3a)]{Rosengren} proved an identity of index $(1,1,6)$. Kanade and Russell \cite{KR-2019} presented four conjectural identities of index $(1,2,3,4)$.
Takigiku and Tsuchioka \cite{Takigiku} proved some identities of indexes $(1,2,2)$ and $(1,2,2,4)$, which are related to the principal characters of the level 5 and level 7 standard modules of the affine Lie algebra $A_2^{(2)}$. For example, they proved that \cite[Theorem 1.3]{Takigiku}
\begin{align}
&\sum_{i,j,k\geq 0}\frac{q^{\binom{i}{2}+8\binom{j}{2}+10\binom{k}{2}+2ij+2ik+8jk+i+4j+5k}}{(q;q)_i(q^2;q^2)_j(q^2;q^2)_k} \nonumber \\
&=\frac{1}{(q,q^3,q^4,q^5,q^7,q^9,q^{11},q^{13},q^{15},q^{16},q^{17},q^{19};q^{20})_\infty}.
\end{align}


Recently, Mc Laughlin \cite{Laughlin} applied Rosengren's method in \cite{Rosengren} to derive some new Rogers-Ramanujan type identities including the following one of index $(1,2,3)$
\begin{align}\label{Laughlin123}
\sum_{i,j,k\geq 0} \frac{(-1)^j q^{(3k+2j-i)(3k+2j-i-1)/2+j(j-1)-i+6j+6k}}{(q;q)_i(q^2;q^2)_j(q^3;q^3)_k}=\frac{(-1;q)_\infty (q^{18};q^{18})_\infty}{(q^3;q^3)_\infty (q^9;q^{18})_\infty}.
\end{align}
Note that in \cite{Laughlin}, such identities are called as identities of Kanade-Russell type. In the way of finding generalizations of Capparelli's first partition identity, Dousse and Lovejoy \cite[Eqs.\ (2.6),(2.7)]{Dousse-Lovejoy} proved the following identity of index $(1,1,1,2)$:
\begin{align}\label{DL1112}
    \sum_{i,j,k,l\geq 0} \frac{a^{i+l}b^{j+l}q^{\binom{i+j+k+2l+1}{2}+\binom{i+1}{2}+\binom{j+1}{2}+l}}{(q;q)_i(q;q)_j(q;q)_k(q^2;q^2)_l}=(-q;q)_\infty (-aq^2,-bq^2;q^2)_\infty.
\end{align}

Motivated by the above works, in this paper, we will use the integral method to establish some Rogers-Ramanujan type identities of the following indexes
$$(1,1),(1,2), (1,1,1), (1,1,2), (1,1,3), (1,2,2), (1,2,3), (1,2,4).$$
Most of our results are new. Some of them contain additional parameters and thus indicate infinite families of Rogers-Ramanujan type identities. For instance, we prove that (see Theorems \ref{thm-11-general} and \ref{thm-R-3})
\begin{align}
\sum_{i,j\geq 0} \frac{u^{i-j}q^{\binom{i}{2}+\binom{j+1}{2}+a\binom{j-i}{2}}}{(q;q)_i(q;q)_j}&=\frac{(-uq^a,-q/u,q^{a+1};q^{a+1})_\infty}{(q;q)_\infty}, \label{intro-eq-J-3}\\
\sum_{i,j,k\geq0}\frac{(-1)^{i+j}b^{-i+j}c^{i-j+k}q^{(i^{2}+(i-j+2k)^{2}-2i+3j-2k)/2}}{(q;q)_{i}(q;q)_{j}(q^{2};q^{2})_{k}}&=\frac{(-q,bq^{2}/c;q)_{\infty}(bq,c/b;q^{2})_{\infty}}
{(b^{2}q^{2}/c;q^{2})_{\infty}}.
\end{align}
Some of the identities we discovered are quite surprising. For example, we find that for any $u\in \mathbb{C}$ (see Theorems \ref{thm-4112-3} and \ref{thm-123}),
\begin{align}\label{intro-eq-4112-3}
\sum_{i,j,k\geq0}\frac{(-1)^{i+j}u^{i+3k}q^{(i^{2}-i)/2+(i-2j+3k)^{2}/4}}{(q;q)_{i}(q^{2};q^{2})_{j}(q^{3};q^{3})_{k}}&=\frac{(u^{2};q)_{\infty}(q,-u^{2};q^{2})_{\infty}}{(-u^{6};q^{6})_{\infty}}, \\
\sum_{i,j,k\geq 0}\frac{(-1)^{(i-2j+3k)/2}u^{i+k}q^{(i^{2}-i)/2+(i-2j+3k)^{2}/4}}
{(q;q)_{i}(q^{2};q^{2})_{j}(q^{3};q^{3})_{k}}
&=\frac{(q;q^{2})_{\infty}(-u^{2};q^{3})_{\infty}}
 {(u^{2};q^{6})_{\infty}}.
\end{align}
A rough look at these  identities will let us doubt their correctness. From the expression of each identity, it is expected that the left side will be a power series in $q^{1/4}$. But it turns out that it is a power series in $q$, as the right side indicates.

The rest of this paper is organized as follows. In Section \ref{sec-pre} we collect some useful $q$-series formulas which will be used to derive our identities. In Sections \ref{sec-double} and \ref{sec-triple} we present and prove identities involving double sums and triple sums, respectively. Finally, we give some concluding remarks in Section \ref{sec-concluding} including a new proof of \eqref{DL1112} via the integral method.

\section{Preliminaries}\label{sec-pre}
Throughout this paper we will denote $\zeta_n=e^{2\pi i/n}$.

First, we need Euler's $q$-exponential identities
\begin{align}\label{Euler}
\sum_{n=0}^\infty \frac{z^n}{(q;q)_n}=\frac{1}{(z;q)_\infty}, \quad \sum_{n=0}^\infty \frac{q^{\binom{n}{2}} z^n}{(q;q)_n}=(-z;q)_\infty, \quad |z|<1.
\end{align}
These two identities are corollaries of the $q$-binomial theorem
\begin{align}\label{q-binomial}
    \sum_{n=0}^\infty \frac{(a;q)_n}{(q;q)_n}z^n=\frac{(az;q)_\infty}{(z;q)_\infty}, \quad |z|<1.
\end{align}
We also need the Jacobi triple product identity
\begin{align}\label{Jacobi}
(q,z,q/z;q)_\infty=\sum_{n=-\infty}^\infty (-1)^nq^{\binom{n}{2}}z^n.
\end{align}

We recall the basic hypergeometric series
$${}_r\phi_s\bigg(\genfrac{}{}{0pt}{} {a_1,\dots,a_r}{b_1,\dots,b_s};q,z  \bigg):=\sum_{n=0}^\infty \frac{(a_1,\dots,a_r;q)_n}{(q,b_1,\dots,b_s;q)_n}\Big((-1)^nq^{\binom{n}{2}} \Big)^{1+s-r}z^n.$$
For a series $f(z)=\sum_{n=-\infty}^\infty a(n)z^n$, we shall use $[z^n]f(z)$ to denote the coefficient of $z^n$. That is, $[z^n]f(z)=a(n)$. We recall the following simple fact
\begin{align}\label{int-constant}
\oint_K f(z) \frac{dz}{2\pi iz}=[z^0]f(z),
\end{align}
where $K$ is a positively oriented and simple closed contour around the origin. This fact will be used frequently but usually without mention.

There are two steps in using the integral method to prove Rogers-Ramanujan type identities:
\begin{itemize}
\item \textbf{Step 1.} Express the sum side as a finite sum of integrals of some infinite products.
\item \textbf{Step 2.} Evaluate each of these integrals.
\end{itemize}

The first step is quite straightforward. In the proofs of all the  Rogers-Ramanujan type identities appeared in \cite{Rosengren}, \cite{Wang} and this paper, this step will be done by the use of \eqref{Euler} and \eqref{Jacobi}.

The main difficulty lies in the second step. In the book \cite[Sections 4.9 and 4.10]{GR-book}, calculations of the integral
$$\oint_K \frac{(a_1z,\cdots,a_Az,b_1/z,\cdots,b_B/z;q)_\infty}{(c_1z,\cdots,c_Cz,d_1/z,\cdots,d_D/z;q)_\infty}z^{m}\frac{dz}{2\pi iz} $$
are given. Here $m$ is an integer,  $K$ is a deformation of the (positively oriented) unit circle so that the poles of $1/(c_1z,\cdots,c_Cz;q)_\infty$ lie outside the contour and the origin and poles of $1/(d_1/z,\cdots,d_D/z;q)_\infty$ lie inside the contour. Throughout this paper, all the integral paths will be chosen in this way and we will omit them from the integral symbol. We will not need these  general calculations. Instead, we recall some known formulas which will suffice to establish our multi-sum Rogers-Ramanujan type identities.

First, from \cite[Eq.\ (4.10.8)]{GR-book} we find that when $|a_1a_2a_3|<|c_1c_2c_3|$,
\begin{align}\label{GR41010}
&\oint \frac{(a_{1}z,a_{2}z,a_{3}z,b_{1}/z;q)_{\infty}}
{(c_{1}z,c_{2}z,c_{3}z,d_{1}/z;q)_{\infty}}\frac{dz}{2\pi iz} \\
& = \frac{(a_{1}d_{1},a_{2}d_{1},a_{3}d_{1},b_{1}/d_{1};q)_{\infty}}
 {(q,c_{1}d_{1},c_{2}d_{1},c_{3}d_{1};q)_{\infty}}
 \times{}_4\phi _3\left(
 \begin{gathered}
c_{1}d_{1},c_{2}d_{1},c_{3}d_{1},qd_{1}/b_{1}\\
 a_{1}d_{1},a_{2}d_{1},a_{3}d_{1}
 \end{gathered}
 ;q,b_{1}/d_{1}
 \right). \nonumber
\end{align}
From \cite[Eq.\ (4.11.2), (4.11.3)]{GR-book} we find
\begin{align}
\oint \frac{(cz/\beta,qz/c\alpha,c\alpha/z,q\beta/cz;q)_{\infty}}{(az,bz,\alpha/z,\beta/z;q)_{\infty}}\frac{dz}{2\pi iz}
=\frac{(ab\alpha\beta,c,q/c,c\alpha/\beta,q\beta/c\alpha;q)_{\infty}}{(a\alpha,a\beta,b\alpha,b\beta,q;q)_{\infty}}, \label{GR4112}
\end{align}
\begin{align}
&\oint \frac{(\delta z,qz/\gamma,\gamma/z,\gamma z/\alpha\beta,q\alpha\beta/\gamma z;q)_{\infty}}
{(az,bz,cz,\alpha/z,\beta/z;q)_{\infty}}\frac{dz}{2\pi iz} \nonumber \\
&= \frac{(\gamma /\alpha,q\alpha/\gamma ,\gamma/\beta,q\beta/\gamma,\delta/a,\delta/b,\delta/c;q)_{\infty}}
 {(a\alpha,a\beta,b\alpha,b\beta,c\alpha,c\beta,q;q)_{\infty}},  \label{GR4113}
\end{align}
where $\delta=abc\alpha\beta$, $abc\alpha\beta\gamma\neq 0$ and
$$a\alpha,a\beta,b\alpha,b\beta,c\alpha,c\beta \neq q^{-n}, \quad n=0,1,2,\dots.$$
Clearly, \eqref{GR4112} follows from \eqref{GR4113} after letting $c\rightarrow 0$.

Next, we recall some identities in Rosengren's work \cite{Rosengren}.  From \cite[Eq.\ (3.2)]{Rosengren} we know that when $\alpha_1\alpha_2=\beta_1\beta_2\beta_3$,
\begin{align}\label{R32}
\oint \frac{(\alpha_1z,\alpha_2z,qz,1/z;q)_\infty}{(\beta_1z,\beta_2z,\beta_3z;q)_\infty}\frac{\diff z}{2\pi iz}=\frac{(\beta_1,\alpha_1/\beta_1;q)_\infty}{(q;q)_\infty}{}_2\phi_1\bigg(\genfrac{}{}{0pt}{}{\alpha_2/\beta_2,\alpha_2/\beta_3}{\beta_1};q,\frac{\alpha_1}{\beta_1}\bigg).
\end{align}

From the proof of \cite[Proposition\ 3.2]{Rosengren}, we conclude that
\begin{align}\label{Prop32-proof}
 \oint \frac{(abz,cz,qz/t,t/z;q)_{\infty}}{(az,bz,cz/t,d/z;q)_{\infty}}\frac{dz}{2\pi iz}=\frac{(abd,dq/t,t,c;q)_{\infty}}{(q,ad,bd,cd/t;q)_{\infty}}
 {}_3\phi _2\left(
 \begin{gathered}
 a,b,cd/t\\
 c,abd
 \end{gathered}
 ;q,t
 \right).
\end{align}

Using the above formulas in Step 2, we can convert the sum-side of our Rogers-Ramanujan type identities to a ${}_r\phi_s$ series. Then to complete Step 2, it remains to evaluate this ${}_r\phi_s$ series. Here we recall the $q$-Gauss summation formula \cite[(\uppercase\expandafter{\romannumeral2}. 8)]{GR-book}
\begin{align}\label{q-Gauss}
{}_2\phi_1\bigg(\genfrac{}{}{0pt}{}{a,b}{c};q,\frac{c}{ab}  \bigg)=\frac{(c/a,c/b;q)_\infty}{(c,c/ab;q)_\infty},
\end{align}
the Bailey-Daum summation formula \cite[(\uppercase\expandafter{\romannumeral2}. 9)]{GR-book}
\begin{align}\label{BD}
{}_2\phi_1\bigg(\genfrac{}{}{0pt}{} {a,b}{aq/b};q,-\frac{q}{b}  \bigg)=\frac{(-q;q)_\infty (aq,aq^2/b^2;q^2)_\infty}{(aq/b,-q/b;q)_\infty}
\end{align}
and the $q$-Dixon summation formula \cite[(\uppercase\expandafter{\romannumeral2}.13)]{GR-book}
\begin{align}\label{II13}
{}_4\phi _3\left(
 \begin{gathered}
a,-qa^{1/2},b,c\\
 -a^{1/2},aq/b,aq/c
 \end{gathered}
 ;q, \frac{qa^{1/2}}{bc}
 \right)
 =\frac{(aq,qa^{1/2}/b,qa^{1/2}/c,aq/bc;q)_{\infty}}
 {(aq/b,aq/c,qa^{1/2},qa^{1/2}/bc;q)_{\infty}}.
\end{align}

\section{Identities involving double sums}\label{sec-double}

In this section, we present some identities involving double sums of indexes $(1,1)$ and $(1,2)$.

\subsection{Identities of index $(1,1)$}

\begin{theorem}\label{thm-R-1}
We have
\begin{align}
\sum_{i,j\geq0}\frac{(-1)^{i+j}u^{i}v^{j}q^{((i-j)^{2}-i-j)/2}}{(q;q)_{i}(q;q)_{j}}= \frac{(u,v;q)_{\infty}}{(uv/q;q)_{\infty}}. \label{eq-R-1}
\end{align}
\end{theorem}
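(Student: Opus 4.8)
The plan is to follow the two-step integral method advertised in the preliminaries. The key observation is that the summand factors, so by the second Euler identity in \eqref{Euler} the double sum should split into a product of two single sums after introducing an auxiliary integration variable to handle the cross term $-ij$ hidden in $(i-j)^2 = i^2 - 2ij + j^2$. Writing the exponent as $\tfrac12\big((i-j)^2 - i - j\big) = \binom{i}{2} + \binom{j}{2} - ij$, I would introduce a contour variable $z$ to decouple the $-ij$ term. First I would use \eqref{int-constant} to write $q^{-ij}$ (or rather a suitable monomial in $z$) as a coefficient extraction, so that the double sum becomes a single contour integral of a product of two independent generating functions in $z$ and $1/z$.

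Concretely, I would aim to realize the left-hand side as
\begin{align}
\sum_{i,j\geq 0}\frac{(-1)^{i+j}u^iv^jq^{\binom{i}{2}+\binom{j}{2}-ij}}{(q;q)_i(q;q)_j}
=\oint \Big(\sum_{i\geq 0}\frac{q^{\binom{i}{2}}(-uz)^i}{(q;q)_i}\Big)\Big(\sum_{j\geq 0}\frac{q^{\binom{j}{2}}(-v/z)^j}{(q;q)_j}\Big)\frac{dz}{2\pi iz}, \nonumber
\end{align}
where the coupling $q^{-ij}$ is produced by the $z^{i}$ and $z^{-j}$ factors together with a compensating shift; one must check the bookkeeping so that extracting $[z^0]$ reconstructs exactly the term $q^{-ij}$. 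Applying the second Euler identity to each inner sum collapses both factors into infinite products, giving
\begin{align}
\oint (uz;q)_\infty\,(v/z;q)_\infty\,\frac{dz}{2\pi iz}, \nonumber
\end{align}
possibly with a product shift in $z$ to encode the $q^{-ij}$ correctly. The main point is that after Step 1 the integrand is a ratio (here just a product) of infinite products of the type handled by the formulas in Section \ref{sec-pre}.

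For Step 2 I would evaluate this integral. Since the integrand has no denominator in $z$ apart from the trivial $1/z$, this is a pure coefficient extraction rather than a residue computation, and one can either expand both $q$-Pochhammer products via \eqref{Euler}/the second identity and match the $z^0$ coefficient directly, or recognize it as a limiting/degenerate case of \eqref{GR41010} with the $c_i$ parameters sent to $0$. The cleanest route is likely a direct appeal to the $q$-Gauss summation \eqref{q-Gauss}: after the integral is rewritten the resulting single series should be a ${}_2\phi_1$ that \eqref{q-Gauss} evaluates to $(u,v;q)_\infty/(uv/q;q)_\infty$. I expect the target form $(u,v;q)_\infty/(uv/q;q)_\infty$ to emerge precisely because $uv/q$ is the natural combination $c/ab$ appearing in the $q$-Gauss formula.

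The step I expect to be the main obstacle is the precise bookkeeping in Step 1: getting the powers of $z$ and $q$ to line up so that $[z^0]$ of the product of two Euler series reproduces exactly $q^{-ij}$ and not some shifted exponent. This requires care with the $\binom{i}{2}$ versus $i^2$ conventions and with how the $-ij$ cross term is encoded by the contour variable; a sign error or an off-by-one in the $q$-power would change $uv/q$ into $uv$ or $uvq$. Once the integrand is correctly identified as an infinite product (or a genuinely convergent instance of one of the recalled integral evaluations), the remaining summation is routine and reduces to \eqref{q-Gauss}. I would double-check the final answer by comparing low-order coefficients in $u$ and $v$ against direct expansion of both sides.
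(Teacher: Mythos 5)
There is a genuine gap, and it is structural rather than ``bookkeeping.'' Your Step 1 proposes to realize the double sum as
$[z^0]$ of a product of \emph{two} one-sided series, $\bigl(\sum_{i\ge 0} a_i z^i\bigr)\bigl(\sum_{j\ge 0} b_j z^{-j}\bigr)$. But the constant term of such a product is the single diagonal sum $\sum_{i\ge 0} a_i b_i$ (the constraint is $i=j$), so your candidate integral $\oint (uz;q)_\infty (v/z;q)_\infty \frac{dz}{2\pi iz}$ equals $\sum_{i\ge 0} u^iv^iq^{2\binom{i}{2}}/(q;q)_i^2$, not the two-parameter sum on the left of \eqref{eq-R-1}. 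No ``product shift in $z$'' fixes this: a monomial or Pochhammer shift changes which diagonal you extract, but you still get one free index, not two, and in particular the cross term $q^{-ij}$ can never arise this way. The missing idea is a \emph{third}, bilateral factor in the integrand. The paper expands $1/(bz;q)_\infty$ and $1/(d/z;q)_\infty$ by the first Euler identity (contributing $z^i$ and $z^{-j}$ with no quadratic $q$-powers) and multiplies by the theta function $(q;q)_\infty(qz/t,t/z;q)_\infty=\sum_k(-t/z)^kq^{\binom{k}{2}}$ from \eqref{Jacobi}; the constant term then forces $k=i-j$, leaving two free indices, and the single factor $q^{\binom{i-j}{2}}$ supplies the entire exponent $\binom{i}{2}+\binom{j}{2}-ij$ (up to a linear adjustment absorbed into $t$). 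So the quadratic coupling must come from Jacobi's triple product, not from the Euler series.

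Your Step 2 is also not quite what happens: once the integral is set up correctly it is the case $a=c=0$ of \eqref{Prop32-proof}, and the resulting series is a ${}_1\phi_0$ evaluated by the $q$-binomial theorem \eqref{q-binomial}, not a ${}_2\phi_1$ needing $q$-Gauss (though your instinct that $uv/q$ plays the role of $c/ab$ is reasonable). If you want to avoid the contour altogether, note that the paper's second proof does exactly what your opening sentence suggests but without the integral: sum over $i$ first with the second Euler identity to get $(uq^{-j};q)_\infty$, pull out $(u;q)_\infty$ and a factor $(q/u;q)_j(-u/q)^j q^{-\binom{j}{2}}$, and finish with \eqref{q-binomial}. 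Either repair is short, but as written your construction of the integrand does not reproduce the left-hand side.
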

Note that the identity \eqref{eq-R-1} is symmetric in $u$ and $v$.
\begin{proof}
Setting $a=c=0$ in \eqref{Prop32-proof}, we deduce that
\begin{align}
(q;q)_{\infty}\oint \frac{(qz/t,t/z;q)_{\infty}}{(bz,d/z;q)_{\infty}}\frac{dz}{2\pi iz}
=\frac{(dq/t,t;q)_{\infty}}{(bd;q)_{\infty}}
\sum_{n\geq0}\frac{(b;q)_{n}}{(q;q)_{n}}t^{n}
  =\frac{(dq/t,bt;q)_{\infty}}
{(bd;q)_{\infty}},
\end{align}
where for the last equality we used \eqref{q-binomial}.

Now by \eqref{Euler} and \eqref{Jacobi},
\[
\begin{split}
 LHS&=\oint  \sum_{i,j\geq0}\sum_{k= -\infty}^{\infty}\frac{(bz)^{i} (d/z)^{j} (-t/z)^{k} q^{(k^{2}-k)/2}}{(q;q)_{i}(q;q)_{j}} \frac{dz}{2\pi iz}\\
 &=\sum_{i,j\geq0}\frac{(-1)^{i+j}b^{i}d^{j}t^{i-j}q^{((i-j)^{2}-i+j)/2}}{(q;q)_{i}(q;q)_{j}}.
\end{split}
\]
Here we used \eqref{int-constant} for the second equality. This proves the desired identity after replacing $bt$ by $u$, and $dq/t$ by $v$.
\end{proof}
We can also prove Theorem \ref{thm-R-1} by the following way.
\begin{proof}[Second proof of Theorem \ref{thm-R-1}]
Summing over $i$ first using \eqref{Euler} and then applying \eqref{q-binomial}, we have
\begin{align*}
&\sum_{i,j\geq0}\frac{(-1)^{i+j}u^{i}v^{j}q^{((i-j)^{2}-i-j)/2}}{(q;q)_{i}(q;q)_{j}}=\sum_{j\geq 0} \frac{(-v)^{j}q^{(j^2-j)/2}}{(q;q)_j} \sum_{i\geq 0}\frac{(-uq^{-j})^{i}q^{(i^2-i)/2}}{(q;q)_i} \nonumber \\
&=\sum_{j\geq 0} \frac{(uq^{-j};q)_\infty (-v)^jq^{(j^2-j)/2}}{(q;q)_j} =(u;q)_\infty \sum_{j\geq 0}\frac{(uv/q)^{j}(q/u;q)_j}{(q;q)_j} \nonumber \\
&=\frac{(u,v;q)_\infty }{(uv/q;q)_\infty}. \qedhere
\end{align*}
\end{proof}

Setting $u=-q$, $v=-q^{1/2}$ and $u=-q$, $v=-q$ in Theorem \ref{thm-R-1}, we obtain
\begin{align}
  \sum_{i,j\geq 0}\frac{q^{((i-j)^{2}+i)/2}}{(q;q)_{i}(q;q)_{j}}&=\frac{1}{(q^{1/2};q)_{\infty}^{2}}, \label{eq-thm3.1-cor-1} \\
  \sum_{i,j\geq 0}\frac{q^{((i-j)^{2}+i+j)/2}}{(q;q)_{i}(q;q)_{j}}&=\frac{(q^{2};q^{2})_{\infty}^{2}}{(q;q)_{\infty}^{3}}.\label{eq-thm3.1-cor-1.1}
\end{align}

\begin{theorem}\label{thm-4112-2}
We have
\begin{equation}\label{eq-4112-2}
\sum_{i,j\geq0}\frac{(-1)^{i+j}u^{i}q^{(i-j)^{2}}}{(q^{2};q^{2})_{i}(q^{2};q^{2})_{j}} =\frac{(u;q)_{\infty}(q;q^{2})_{\infty}}{(u;q^{2})_{\infty}^{2}}.
\end{equation}
\end{theorem}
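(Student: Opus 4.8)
The plan is to follow the integral method exactly as in the proof of Theorem \ref{thm-R-1}, the only new wrinkles being that the relevant base is $q^2$ rather than $q$ and that the Gaussian factor $q^{(i-j)^2}$ must be produced from a base-$q^2$ theta function. First I would record the specialization of the Jacobi triple product \eqref{Jacobi} (with $q$ replaced by $q^2$ and $z$ replaced by $-qz$) in the form
$$\sum_{m=-\infty}^\infty q^{m^2} z^{-m} = (q^2, -qz, -q/z; q^2)_\infty,$$
which is the device that will encode $q^{(i-j)^2}$ once a constant term is extracted.

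Second, I would expand the two geometric-type sums by Euler's identity \eqref{Euler} in base $q^2$, attaching $z^i$ to the $i$-summation and $z^{-j}$ to the $j$-summation:
$$\sum_{i\geq 0}\frac{(-uz)^i}{(q^2;q^2)_i} = \frac{1}{(-uz;q^2)_\infty}, \qquad \sum_{j\geq 0}\frac{(-1/z)^j}{(q^2;q^2)_j} = \frac{1}{(-1/z;q^2)_\infty}.$$
Multiplying these against the theta series and invoking \eqref{int-constant}, the condition $[z^0]$ forces the theta summation index to equal $i-j$, so the Gaussian factor $q^{(i-j)^2}$ appears with precisely the coefficient $(-1)^{i+j}u^i/((q^2;q^2)_i(q^2;q^2)_j)$. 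This represents the left-hand side as
$$\text{LHS} = (q^2;q^2)_\infty \oint \frac{(-qz, -q/z; q^2)_\infty}{(-uz, -1/z; q^2)_\infty}\,\frac{dz}{2\pi i z}.$$

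Third, I would evaluate this integral by the degenerate case of \eqref{Prop32-proof} already isolated inside the proof of Theorem \ref{thm-R-1} (namely setting $a=c=0$ there, read everything in base $q^2$); with the choices $t=-q$, $b=-u$, $d=-1$ this yields $(q,uq;q^2)_\infty/(u;q^2)_\infty$. A one-line rearrangement using the even/odd splitting $(u;q)_\infty = (u;q^2)_\infty(uq;q^2)_\infty$ then turns this into $(u;q)_\infty (q;q^2)_\infty/(u;q^2)_\infty^2$, the claimed product.

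The computations are entirely routine once the setup is fixed, so I do not expect a genuine obstacle; the only points requiring care are choosing the theta decomposition in the correct base so that $q^{(i-j)^2}$ (and not some fractional power) is produced, and respecting the contour convention so that the poles of $1/(-uz;q^2)_\infty$ lie outside and those of $1/(-1/z;q^2)_\infty$ inside the path. If one prefers to avoid contours, the same identity should also fall out of a direct double summation — fixing $i-j=m$ and summing the inner series by the $q$-binomial theorem \eqref{q-binomial} — mirroring the second proof of Theorem \ref{thm-R-1}.
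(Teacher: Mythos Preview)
Your proof is correct. Both you and the paper use the integral method, but you evaluate the resulting integral with a different formula: the paper specializes \eqref{GR4112} with $c=q^{1/2}$, $a=-b$, $\alpha=-\beta$, which produces an integrand in $z^{2}$,
\[
(q^{2};q^{2})_{\infty}\oint \frac{(qz^{2}/\alpha^{2},\,q\alpha^{2}/z^{2};q^{2})_{\infty}}{(a^{2}z^{2},\,\alpha^{2}/z^{2};q^{2})_{\infty}}\,\frac{dz}{2\pi iz},
\]
whereas you go straight to the base-$q^{2}$ version of the $a=c=0$ case of \eqref{Prop32-proof} already isolated in the proof of Theorem~\ref{thm-R-1}. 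Your route is a bit cleaner: it reuses exactly the tool from the previous theorem and avoids invoking the heavier identity \eqref{GR4112}; the trade-off is only that the paper's choice advertises \eqref{GR4112} as a source of further identities. The paper itself remarks (Remark~\ref{rem-sec3}) that this theorem also falls to the ``sum over one index first'' argument you mention at the end, so your alternative is in the intended spirit.
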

\begin{proof}
Setting $c=q^{1/2}$, $a=-b$ and $\alpha=-\beta$ in \eqref{GR4112}, then multiplying both sides by $(q^{2};q^{2})_{\infty}$, we obtain by \eqref{Euler} and \eqref{Jacobi} that the left side of \eqref{GR4112} becomes
\begin{align*}
 LHS&=(q^{2};q^{2})_{\infty}\oint \frac{(qz^{2}/\alpha^{2},q\alpha^{2}/z^{2};q^{2})_{\infty}}
{(a^{2}z^{2},\alpha^{2}/z^{2};q^{2})_{\infty}}\frac{dz}{2\pi iz}\\
 &=\oint \sum_{i,j\geq0}\sum_{k= -\infty}^{\infty}\frac{(a^{2}z^{2})^{i} (\alpha^{2}/z^{2})^{j} (-q\alpha^{2}/z^{2})^{k}q^{k^{2}-k}}{(q^{2};q^{2})_{i}(q^{2};q^{2})_{j}} \frac{dz}{2\pi iz}\\
 &=
 \sum_{i,j\geq0}\frac{(-1)^{i+j}a^{2i}\alpha^{2i}q^{(i-j)^{2}}}{(q^{2};q^{2})_{i}(q^{2};q^{2})_{j}},
\end{align*}
and the right side of \eqref{GR4112} becomes
\begin{align*}
 RHS=\frac{(a^{2}\alpha^{2};q)_{\infty}(q;q^{2})_{\infty}}{(a^{2}\alpha^{2};q^{2})_{\infty}^{2}}.
\end{align*}
This proves the theorem after replacing $\alpha^2 a^2$ by $u$.
\end{proof}

For example, if we set $u=-q$, $u=-q^{3/2}$ or $u=-q^2$ in the above theorem and replace $q$ by $q^2$ in the second assignment, we obtain
\begin{align}
  \sum_{i,j\geq0}\frac{(-1)^{j}q^{(i-j)^{2}+i}}{(q^{2};q^{2})_{i}(q^{2};q^{2})_{j}}&=\frac{(q;q^{2})_{\infty}^{2}}{(q^{2};q^{4})_{\infty}^{2}}, \\
 \sum_{i,j\geq0}\frac{(-1)^{j}q^{2(i-j)^{2}+3i}}{(q^{4};q^{4})_{i}(q^{4};q^{4})_{j}}&= \frac{(q^2,q^{10};q^{8})_{\infty}(q^{3};q^{4})_{\infty}}{(q^{5};q^{4})_{\infty}}, \\
  \sum_{i,j\geq0}\frac{(-1)^{j}q^{(i-j)^{2}+2i}}{(q^{2};q^{2})_{i}(q^{2};q^{2})_{j}}&=\frac{(q,q^{2},q^{6};q^{4})_{\infty}}{(q^{5};q^{4})_{\infty}}.
\end{align}

\begin{theorem}\label{thm-T11}
We have
\begin{align}
\sum_{i,j\geq0}\frac{(-1)^{i+j}q^{(i-j)^{2}/2}(q^{j}-q^{i+1/2})}{(q;q)_{i}(q;q)_{j}}  &=\frac{(q^{1/2};q)_{\infty}^{2}}
 {(q;q)_{\infty}}, \label{T11-2}\\
 \sum_{i,j\geq0}\frac{q^{(i-j)^{2}/2}(q^{j}+q^{i+1/2})}{(q;q)_{i}(q;q)_{j}}  &=\frac{(q;q^{2})_{\infty}}
 {(q^{2};q^{2})_{\infty}(q^{1/2};q)_{\infty}^{2}}. \label{T11-3}
\end{align}
\end{theorem}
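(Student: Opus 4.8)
The plan is to reduce both identities to Theorem \ref{thm-R-1}, exploiting the symmetry of the summand under the exchange $i\leftrightarrow j$, since the quadratic $(i-j)^2$ is symmetric in $i,j$.

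For \eqref{T11-2} I would split the left-hand side as $S_1-S_2$, where
\begin{align*}
S_1=\sum_{i,j\geq0}\frac{(-1)^{i+j}q^{(i-j)^2/2}q^{j}}{(q;q)_i(q;q)_j},\qquad S_2=q^{1/2}\sum_{i,j\geq0}\frac{(-1)^{i+j}q^{(i-j)^2/2}q^{i}}{(q;q)_i(q;q)_j}.
\end{align*}
Swapping the summation indices in $S_2$ shows that the sum there equals $S_1$, so $S_2=q^{1/2}S_1$. To evaluate $S_1$ I would match the exponent $(i-j)^2/2+j$ against the exponent $((i-j)^2-i-j)/2$ of \eqref{eq-R-1}: the discrepancy is $(i+3j)/2$, which is exactly furnished by the monomial $u^iv^j$ upon setting $u=q^{1/2}$, $v=q^{3/2}$. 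Theorem \ref{thm-R-1} then yields $S_1=(q^{1/2},q^{3/2};q)_\infty/(q;q)_\infty$. Hence the left-hand side is $(1-q^{1/2})S_1$, and the elementary relation $(1-q^{1/2})(q^{3/2};q)_\infty=(q^{1/2};q)_\infty$ collapses it to $(q^{1/2};q)_\infty^2/(q;q)_\infty$, which is the claimed value.

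For \eqref{T11-3} the absence of the sign $(-1)^{i+j}$ means I would instead use the variant of \eqref{eq-R-1} obtained by sending $u\mapsto-u$, $v\mapsto-v$, namely
\begin{align*}
\sum_{i,j\geq0}\frac{u^{i}v^{j}q^{((i-j)^2-i-j)/2}}{(q;q)_i(q;q)_j}=\frac{(-u,-v;q)_\infty}{(uv/q;q)_\infty}.
\end{align*}
Writing the left-hand side of \eqref{T11-3} as $T_1+T_2$ with $T_2=q^{1/2}T_1$ by the same symmetry, and again taking $u=q^{1/2}$, $v=q^{3/2}$, I obtain $T_1=(-q^{1/2},-q^{3/2};q)_\infty/(q;q)_\infty$, so the sum equals $(1+q^{1/2})T_1=(-q^{1/2};q)_\infty^2/(q;q)_\infty$.

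The final task is to reconcile this with the stated product. I would invoke two classical factorizations: the relation $(q^{1/2};q)_\infty(-q^{1/2};q)_\infty=(q;q^2)_\infty$, which rewrites $(-q^{1/2};q)_\infty^2=(q;q^2)_\infty^2/(q^{1/2};q)_\infty^2$, and the even/odd split $(q;q)_\infty=(q;q^2)_\infty(q^2;q^2)_\infty$, which turns $(q;q^2)_\infty^2/(q;q)_\infty$ into $(q;q^2)_\infty/(q^2;q^2)_\infty$. Together these produce precisely $(q;q^2)_\infty/\big[(q^2;q^2)_\infty(q^{1/2};q)_\infty^2\big]$. I do not expect any genuine difficulty here; the only delicate point is the bookkeeping, that is, spotting the correct specialization $u=q^{1/2},\,v=q^{3/2}$ and then carefully manipulating the half-integer-power products so that both right-hand sides emerge in their stated closed forms. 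Once the specialization is fixed, the identities follow immediately from Theorem \ref{thm-R-1}.
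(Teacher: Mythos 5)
Your proposal is correct, and every step checks out: the $i\leftrightarrow j$ symmetry of $(i-j)^2$ does give $S_2=q^{1/2}S_1$ and $T_2=q^{1/2}T_1$; the specialization $u=q^{1/2}$, $v=q^{3/2}$ in \eqref{eq-R-1} (note $uv/q=q$, so the series converges and the right side has no pole) yields $S_1=(q^{1/2},q^{3/2};q)_\infty/(q;q)_\infty$; and the product manipulations $(1\mp q^{1/2})(\pm q^{3/2};q)_\infty=(\mp q^{1/2};q)_\infty$, $(q^{1/2};q)_\infty(-q^{1/2};q)_\infty=(q;q^2)_\infty$ and $(q;q)_\infty=(q;q^2)_\infty(q^2;q^2)_\infty$ deliver both stated right-hand sides. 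However, your route is genuinely different from the paper's. The paper proves Theorem \ref{thm-T11} by the integral method: it combines \eqref{GR41010} with the $q$-Dixon summation \eqref{II13} to obtain the integral evaluation \eqref{Eq14}, specializes $a=q^2$ to get \eqref{Eq15}, and then sets $b=q^{-1/2}$ and $b=-q^{-1/2}$, expanding the integrand by \eqref{Euler} and \eqref{Jacobi} to produce the double sums. You instead reduce the theorem to Theorem \ref{thm-R-1} (and its sign-flipped variant) by a symmetry observation, so no new integral or ${}_4\phi_3$ evaluation is needed. Your argument is shorter and exposes the logical fact that Theorem \ref{thm-T11} is a corollary of Theorem \ref{thm-R-1}; the paper's argument keeps the treatment uniform with the rest of the article and illustrates how such identities are generated from contour integrals. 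Your approach is close in spirit to the alternative alluded to in Remark \ref{rem-sec3} (proofs by summing over one index first), since Theorem \ref{thm-R-1} itself admits such a proof, but the explicit $(1\mp q^{1/2})$-factorization via symmetry is a clean shortcut not spelled out in the paper.
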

\begin{proof}
From \eqref{GR41010} and \eqref{II13} we have
\begin{align}\label{Eq14}
&\oint \frac{(-a^{1/2}z,a^{1/2}qz,abz,b/z;q)_{\infty}}
{(az,-a^{1/2}qz,a^{1/2}z,1/z;q)_{\infty}}\frac{dz}{2\pi iz} \nonumber \\
& = \frac{(-a^{1/2},a^{1/2}q,ab,b;q)_{\infty}}
 {(q,a,-a^{1/2}q,a^{1/2};q)_{\infty}}
{}_4\phi _3\left(
 \begin{gathered}
a,-a^{1/2}q,a^{1/2},q/b\\
 -a^{1/2},a^{1/2}q,ab
 \end{gathered}
 ;q,b
 \right)    \nonumber  \\
 &=\frac{(-a^{1/2},aq,a^{1/2}b,a^{1/2}b;q)_{\infty}}
 {(a^{1/2},a,-a^{1/2}q,a^{1/2}q;q)_{\infty}}.
\end{align}
Let $a=q^{2}$ in \eqref{Eq14}. We obtain
  \begin{align}\label{Eq15}
\oint \frac{(-qz,bq^{2}z,b/z;q)_{\infty}}
{(-q^{2}z,qz,1/z;q)_{\infty}}\frac{dz}{2\pi iz}
=\frac{(-q,q^{3},bq,bq;q)_{\infty}}
 {(q,q^{2},-q^{2},q^{2};q)_{\infty}}.
\end{align}

Setting $b=q^{-1/2}$ in \eqref{Eq15} and multiplying both sides by $(q;q)_\infty$, we see that its left side becomes
\begin{align*}
&(q;q)_{\infty}
\oint \frac{(-qz,q^{3/2}z,1/q^{1/2}z;q)_{\infty}}
{(-q^{2}z,qz,1/z;q)_{\infty}}\frac{dz}{2\pi iz} \\
&=\oint (1+qz)\sum_{i,j\geq0}\frac{(qz)^{i}(1/z)^{j}}{(q;q)_{i}(q;q)_{j}}
\sum_{k= -\infty}^{\infty}(-q^{1/2}z)^{-k}q^{(k^{2}-k)/2}\frac{dz}{2\pi iz}    \\
&=\sum_{i,j\geq0}\frac{(-1)^{i+j}q^{(i-j)^{2}/2}(q^{j}-q^{i+1/2})}{(q;q)_{i}(q;q)_{j}}, \end{align*}
and its right side becomes
\begin{align*}
RHS=\frac{(-q,q^{3},q^{1/2},q^{1/2};q)_{\infty}}
 {(q^{2},-q^{2},q^{2};q)_{\infty}}
 =\frac{(q^{1/2};q)_{\infty}^{2}}
 {(q;q)_{\infty}}.
\end{align*}
This proves \eqref{T11-2}.

Similarly, setting $b=-q^{-1/2}$ in \eqref{Eq15} and applying \eqref{Euler} and \eqref{Jacobi}, we obtain \eqref{T11-3}.
\end{proof}

Note that if we set $b=-1$ in \eqref{Eq15}, then we obtain \eqref{eq-thm3.1-cor-1.1}.

\begin{rem}\label{rem-sec3}
Similar to the second proof of Theorem \ref{thm-R-1}, Theorems \ref{thm-4112-2} and \ref{thm-T11} can also be proved by summing over one of the index first. We omit these proofs.
\end{rem}

Now we present another set of Rogers-Ramanujan type identities of index $(1,1)$. These identities are proved by repeated use of the Jacobi triple product identity, and we do not need to calculate any ${}_r\phi_s$ series.
\begin{theorem}\label{thm-11-general}
We have
\begin{align}
\sum_{i,j\geq 0} \frac{u^{i-j}q^{\binom{i}{2}+\binom{j+1}{2}+a\binom{j-i}{2}}}{(q;q)_i(q;q)_j}=\frac{(-uq^a,-q/u,q^{a+1};q^{a+1})_\infty}{(q;q)_\infty}.
\end{align}
\end{theorem}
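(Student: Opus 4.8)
The target identity has index $(1,1)$ and, like the final theorem before it (Theorem \ref{thm-11-general}), the right-hand side is a product of three theta-type factors indexed by the parameter $a$. This strongly suggests proving it by the same philosophy advertised in the paragraph preceding the statement: repeated use of the Jacobi triple product identity \eqref{Jacobi}, with no $_r\phi_s$ evaluation. The first move is to recognize that the exponent $\binom{i}{2}+\binom{j+1}{2}+a\binom{j-i}{2}$ is a quadratic form in $i,j$, and to change variables so that one of the two summation directions decouples into a theta function. The natural substitution is to set $m=j-i$ (the difference controlling the factor $q^{a\binom{j-i}{2}}$) and keep, say, $i$ as the inner variable.

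**Key steps.** First I would sum over one index using Euler's identity \eqref{Euler}, exactly as in the second proof of Theorem \ref{thm-R-1}. Writing the inner sum over $i$ (for fixed difference) and using $q^{\binom{i}{2}}$ together with $(q;q)_i$ in the denominator, the second Euler identity $\sum_i q^{\binom{i}{2}}z^i/(q;q)_i=(-z;q)_\infty$ should collapse the $i$-sum into an infinite product of the form $(-\,\cdot\,;q)_\infty$. The remaining single sum over $m=j-i$ should then carry exactly the exponent $a\binom{m}{2}$ plus a linear term, i.e. a sum of the shape $\sum_{m}q^{a\binom{m}{2}}(\text{something})^m$; here one passes from $q$ to base $q^{a+1}$. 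The point is that after the $i$-summation the surviving factor $q^{a\binom{m}{2}}$ combines with the residual $q$-powers to produce a clean theta sum in base $q^{a+1}$, which \eqref{Jacobi} turns into $(-uq^a,-q/u,q^{a+1};q^{a+1})_\infty$. The prefactor $1/(q;q)_\infty$ on the right should emerge from the Euler product generated by the $i$-summation.

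**The main obstacle.** The delicate part is the bookkeeping of the quadratic exponent after the change of variables. Because the cross term hidden in $\binom{i}{2}+\binom{j+1}{2}+a\binom{j-i}{2}$ mixes $i$ and $m=j-i$, I expect that completing the $i$-sum will leave an $i$-independent residue only after carefully splitting the exponent so that the piece fed into Euler's identity has the exact form $q^{\binom{i}{2}}z^i$ with $z$ a monomial in $q^m$ and $u$. Verifying that the leftover $m$-dependence assembles precisely into the three Jacobi factors in base $q^{a+1}$ — in particular that the linear-in-$m$ terms produce the arguments $-uq^a$ and $-q/u$ and that the quadratic part matches $q^{(a+1)\binom{m}{2}}$ up to the correct shift — is where the real care is needed. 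A secondary subtlety is that the theorem is stated for general $a$, so one must check that the manipulation is valid as a formal identity in $q$ (or for suitable convergence) uniformly in the integer parameter $a$, rather than relying on any $a$-specific collapse.

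**Alternative.** If the direct double-summation route proves awkward, I would fall back on the integral method that frames the paper: express the sum as $\oint$ of an infinite product via \eqref{Euler} and \eqref{Jacobi} (Step 1), and then evaluate using one of the contour formulas \eqref{GR4112}--\eqref{Prop32-proof}. However, given the explicit remark that this family is proved purely by iterated Jacobi triple products, I expect the elementary summation approach above to be the intended and cleaner one, and I would pursue it first.
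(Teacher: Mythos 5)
Your overall strategy---group the double sum by the difference $m=j-i$, evaluate the inner sum, then recognize the outer $m$-sum as a theta series in base $q^{a+1}$---can be made to work, and the outer step is exactly right: one finds
\begin{align*}
\sum_{i,j\ge 0}\frac{u^{i-j}q^{\binom i2+\binom{j+1}2+a\binom{j-i}2}}{(q;q)_i(q;q)_j}
=\frac{1}{(q;q)_\infty}\sum_{m=-\infty}^{\infty}(q/u)^{m}q^{(a+1)\binom m2},
\end{align*}
after which \eqref{Jacobi} in base $q^{a+1}$ gives $(-uq^a,-q/u,q^{a+1};q^{a+1})_\infty/(q;q)_\infty$. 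The genuine gap is in your key step. For fixed $m=j-i$ the inner sum is
\begin{align*}
q^{\binom{m+1}{2}}\sum_{i}\frac{q^{i^2+im}}{(q;q)_i\,(q;q)_{i+m}},
\end{align*}
which has a full $q^{i^2}$ (not $q^{\binom i2}$) in the exponent and \emph{two} $i$-dependent Pochhammer factors in the denominator. No splitting of the exponent puts this in the form $\sum_i q^{\binom i2}z^i/(q;q)_i$, so Euler's identity \eqref{Euler} does not collapse it into a product $(-\,\cdot\,;q)_\infty$; this is not bookkeeping but a structural obstruction. What is needed instead is the Cauchy/Durfee evaluation $\sum_{i\ge\max(0,-m)}q^{i^2+im}/\bigl((q;q)_i(q;q)_{i+m}\bigr)=1/(q;q)_\infty$, equivalently the Laurent expansion $(-z,-q/z;q)_\infty=\frac{1}{(q;q)_\infty}\sum_n q^{\binom n2}z^n$, itself an instance of \eqref{Jacobi}. (Summing over $i$ for fixed $j$, as in the second proof of Theorem \ref{thm-R-1}, also fails here: the coefficient of $i^2$ in the exponent is $(1+a)/2$, not $1/2$.)

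Once that identity is supplied your argument closes, and it is then essentially a reorganization of the paper's proof: the paper computes the constant term of $(uz,q/uz;q)_\infty\,(z,q^a/z;q^a)_\infty$ in two ways, expanding both factors by \eqref{Jacobi} for the product side and expanding the first factor by \eqref{Euler} and the second by \eqref{Jacobi} for the sum side, using only \eqref{int-constant}. Your fallback (the integral method) is therefore the intended route, but note that none of the contour evaluations \eqref{GR4112}--\eqref{Prop32-proof} and no ${}_r\phi_s$ summation is needed for this theorem---only Euler and two applications of the Jacobi triple product.
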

\begin{proof}
By the Jacobi triple product identity, we have
\begin{align*}
    &(q;q)_\infty (q^a;q^a)_\infty \oint (uz,q/uz;q)_\infty (z,q^a/z;q^a)_\infty \frac{dz}{2\pi iz} \nonumber \\
    &=\oint \sum_{i,j=-\infty}^\infty (-uz)^i q^{\binom{i}{2}} (-z)^jq^{a\binom{j}{2}}\frac{dz}{2\pi iz} \nonumber \\
    &=\sum_{i=-\infty}^\infty u^iq^{(a-1)i/2}q^{(a+1)i^2/2} \nonumber \\
    &=(-uq^a,-q/u,q^{a+1};q^{a+1})_\infty.
\end{align*}
By \eqref{Euler} and \eqref{Jacobi}, the left side of this identity can also be written as
\begin{align*}
    LHS&=(q;q)_\infty \oint \sum_{i,j\geq 0}\frac{(-uz)^iq^{\binom{i}{2}}}{(q;q)_i}\cdot \frac{(-q/uz)^jq^{\binom{j}{2}}}{(q;q)_j}\cdot
    \sum_{k=-\infty}^\infty (-z)^k q^{a\binom{k}{2}}\frac{dz}{2\pi iz} \nonumber \\
    &=(q;q)_\infty\sum_{i,j\geq 0}\frac{u^{i-j}q^{\binom{i}{2}+\binom{j+1}{2}+a\binom{j-i}{2}}}{(q;q)_i(q;q)_j}.
\end{align*}
This proves the desired identity.
\end{proof}

Replacing $q$ by $q^{m_1}$ and setting  $a=m_2/m_1$ and $u=\pm q^{n}$, where $m_1,m_2>0$ and $n\in \mathbb{R}$, we obtain the following corollary.
\begin{corollary}\label{cor-Jacobi-add-1}
We have
\begin{align}
&\sum_{i,j\geq 0}\frac{q^{((m_{1}+m_{2})(i^{2}+j^{2})-2m_{2}ij+(2n-m_{1}+m_{2})(i-j))/2}}{(q^{m_{1}};q^{m_{1}})_{i}(q^{m_{1}};q^{m_{1}})_{j}} \nonumber \\
&=\frac{(-q^{m_{1}-n},-q^{m_{2}+n},q^{m_{1}+m_{2}};q^{m_{1}+m_{2}})_{\infty}}
{(q^{m_{1}};q^{m_{1}})_{\infty}}, \label{eq-J-1} \\
&\sum_{i,j\geq 0}\frac{(-1)^{i+j}q^{((m_{1}+m_{2})(i^{2}+j^{2})-2m_{2}ij+(2n-m_{1}+m_{2})(i-j))/2}}{(q^{m_{1}};q^{m_{1}})_{i}(q^{m_{1}};q^{m_{1}})_{j}} \nonumber \\
&=\frac{(q^{m_{1}-n},q^{m_{2}+n},q^{m_{1}+m_{2}};q^{m_{1}+m_{2}})_{\infty}}
{(q^{m_{1}};q^{m_{1}})_{\infty}}. \label{eq-J-2}
\end{align}
\end{corollary}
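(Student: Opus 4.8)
The plan is to obtain both \eqref{eq-J-1} and \eqref{eq-J-2} as direct specializations of Theorem~\ref{thm-11-general}, with no new machinery required. Starting from the identity in that theorem, I would perform three substitutions in succession: replace $q$ by $q^{m_1}$ throughout, set $a=m_2/m_1$, and then take $u=q^{n}$ for \eqref{eq-J-1} and $u=-q^{n}$ for \eqref{eq-J-2}. Since the theorem is valid for arbitrary $a$ and arbitrary nonzero $u$ (the only constraint being $|q|<1$ for convergence, together with $m_1,m_2>0$ so that $|q^{m_1}|,|q^{m_1+m_2}|<1$), these choices are admissible even when $m_2/m_1$ is not an integer; one interprets $(q^{m_1})^{a}=q^{m_2}$ and $(q^{m_1})^{a+1}=q^{m_1+m_2}$.

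On the sum side, after the replacement $q\mapsto q^{m_1}$ and $a=m_2/m_1$ the exponent of $q$ becomes $m_1\binom{i}{2}+m_1\binom{j+1}{2}+m_2\binom{j-i}{2}$. First I would expand these three binomial coefficients, using $\binom{i}{2}=(i^{2}-i)/2$, $\binom{j+1}{2}=(j^{2}+j)/2$ and $\binom{j-i}{2}=((j-i)^{2}-(j-i))/2$, and collect the result as a quadratic form, obtaining $\tfrac12\big[(m_1+m_2)(i^{2}+j^{2})-2m_2ij+(m_2-m_1)(i-j)\big]$. The factor $u^{i-j}$ then contributes $(\pm1)^{i-j}q^{n(i-j)}$; absorbing $q^{n(i-j)}$ shifts the coefficient of $(i-j)$ to $(2n-m_1+m_2)/2$, which reproduces exactly the exponent displayed in \eqref{eq-J-1} and \eqref{eq-J-2}. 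For the minus sign I would use $(-1)^{i-j}=(-1)^{i+j}$ to recover the prefactor $(-1)^{i+j}$ in \eqref{eq-J-2}.

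On the product side, the same substitutions turn $(-uq^{a},-q/u,q^{a+1};q^{a+1})_{\infty}/(q;q)_{\infty}$ into $(-uq^{m_2},-q^{m_1}/u,q^{m_1+m_2};q^{m_1+m_2})_{\infty}/(q^{m_1};q^{m_1})_{\infty}$. Specializing $u=q^{n}$ gives the arguments $-q^{m_2+n}$ and $-q^{m_1-n}$, yielding \eqref{eq-J-1}; specializing $u=-q^{n}$ removes both minus signs, giving $q^{m_2+n}$ and $q^{m_1-n}$ and hence \eqref{eq-J-2}.

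There is no genuine obstacle here: the argument is entirely a matter of substitution and collecting terms. The only point requiring a little care is the bookkeeping of the linear part of the exponent—verifying that the three contributions $m_1\binom{i}{2}$, $m_1\binom{j+1}{2}$, $m_2\binom{j-i}{2}$ together with $q^{n(i-j)}$ combine into the single expression $(2n-m_1+m_2)(i-j)/2$—and confirming that the non-integer value $a=m_2/m_1$ causes no difficulty, since it enters the final formula only through the well-defined quantities $q^{m_2}$ and $q^{m_1+m_2}$.
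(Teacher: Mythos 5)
Your proposal is correct and coincides with the paper's own derivation: the corollary is obtained precisely by replacing $q$ with $q^{m_1}$, setting $a=m_2/m_1$, and taking $u=q^{n}$ for \eqref{eq-J-1} and $u=-q^{n}$ for \eqref{eq-J-2} in Theorem \ref{thm-11-general}. Your bookkeeping of the quadratic form and of the linear term $(2n-m_1+m_2)(i-j)/2$ checks out.
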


As examples, if we set $(m_1,m_2,n)=(1,3,-1)$  in \eqref{eq-J-1}, we obtain
$$\sum_{i,j=0}^\infty \frac{q^{2(i^2+j^2)-3ij}}{(q;q)_i(q;q)_j}=\frac{(-q^2,-q^2,q^4;q^4)_\infty}{(q;q)_\infty}.$$
Setting $(m_1,m_2,n)$ as $(3,4,0)$, $(3,4,1)$ or $(3,4,2)$ in \eqref{eq-J-2}, we obtain
\begin{align}
\sum_{i,j\geq 0}\frac{(-1)^{i+j}q^{(7i^{2}+7j^{2}-8ij+i-j)/2}}{(q^{3};q^{3})_{i}(q^{3};q^{3})_{j}}&=\frac{(q^{3},q^{4},q^{7};q^{7})_{\infty}}{(q^{3};q^{3})_{\infty}}, \\
\sum_{i,j\geq 0}\frac{(-1)^{i+j}q^{(7i^{2}+7j^{2}-8ij+3i-3j)/2}}{(q^{3};q^{3})_{i}(q^{3};q^{3})_{j}}&=
  \frac{(q^{2},q^{5},q^{7};q^{7})_{\infty}}{(q^{3};q^{3})_{\infty}}, \\
  \sum_{i,j\geq 0}\frac{(-1)^{i+j}q^{(7i^{2}+7j^{2}-8ij+5i-5j)/2}}{(q^{3};q^{3})_{i}(q^{3};q^{3})_{j}}&= \frac{(q,q^{6},q^{7};q^{7})_{\infty}}{(q^{3};q^{3})_{\infty}}.
\end{align}

\begin{theorem}\label{thm-J-3}
We have
\begin{align}\label{eq-thm-J-3}
&\sum_{i,j\geq0}\frac{(-1)^{i+j}u^{i-j}q^{(i^{2}-i+j^{2}-j+4a(i-j)^{2})/2}}{(q;q)_{i}(q;q)_{j}} \\
&=\frac{(u^{-1}q^{2a},uq^{2a+1},q^{4a+1};q^{4a+1})_{\infty}+ (uq^{2a},u^{-1}q^{2a+1},q^{4a+1};q^{4a+1})_{\infty}}{(q;q)_{\infty}}. \nonumber
\end{align}
\end{theorem}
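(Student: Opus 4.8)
The plan is to follow the two-step integral method exactly as in the proof of Theorem~\ref{thm-11-general}. First I would rewrite the quadratic exponent so as to split off a genuine ``difference square'' from a linear correction. Using $4a\binom{i-j}{2}=2a(i-j)^{2}-2a(i-j)$ one checks that
\begin{align*}
\frac{i^{2}-i+j^{2}-j+4a(i-j)^{2}}{2}=\binom{i}{2}+\binom{j}{2}+4a\binom{i-j}{2}+2a(i-j),
\end{align*}
so that, after setting $v:=uq^{2a}$, the summand becomes $(-1)^{i+j}v^{i-j}q^{\binom{i}{2}+\binom{j}{2}+4a\binom{i-j}{2}}/((q;q)_{i}(q;q)_{j})$. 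The factors $q^{\binom{i}{2}}$ and $q^{\binom{j}{2}}$ are handled by Euler's identity \eqref{Euler}, producing $(-v/z;q)_{\infty}$ and $(-z/v;q)_{\infty}$, while the factor $q^{4a\binom{i-j}{2}}$ is generated by the Jacobi triple product \eqref{Jacobi} with base $q^{4a}$ in a variable $z$ tuned so that extracting the constant term via \eqref{int-constant} forces the triple-product index to equal $i-j$. This yields
\begin{align*}
\mathrm{LHS}=(q^{4a};q^{4a})_{\infty}\oint(-v/z,-z/v;q)_{\infty}(z,q^{4a}/z;q^{4a})_{\infty}\frac{dz}{2\pi iz},
\end{align*}
which completes Step~1.

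For Step~2 the essential point is that, unlike the factor $(uz,q/uz;q)_{\infty}$ in Theorem~\ref{thm-11-general}, the product $(-v/z,-z/v;q)_{\infty}$ is \emph{not} itself a theta function (its two arguments multiply to $1$ rather than to $q$), so the integral does not collapse to a single product. I would break this asymmetry by peeling off one factor: since $(-z/v;q)_{\infty}=(1+z/v)(-qz/v;q)_{\infty}$, the Jacobi triple product \eqref{Jacobi} gives
\begin{align*}
(-v/z,-z/v;q)_{\infty}=\frac{1+z/v}{(q;q)_{\infty}}\sum_{n=-\infty}^{\infty}q^{\binom{n}{2}}(v/z)^{n}.
\end{align*}
Substituting this, expanding $(z,q^{4a}/z;q^{4a})_{\infty}$ as its triple-product series, and extracting the constant term, the summand $1$ of $1+z/v$ pairs the two triple products ``on the diagonal,'' fusing $q^{\binom{n}{2}}q^{4a\binom{n}{2}}$ into $q^{(4a+1)\binom{n}{2}}$ and yielding one theta function of modulus $4a+1$; the summand $z/v$ shifts the pairing by one step and, after the reindexing $n\mapsto -n$, yields the second theta function of modulus $4a+1$. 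Reading off $v=uq^{2a}$ via \eqref{Jacobi}, these are precisely the two products $(uq^{2a},u^{-1}q^{2a+1},q^{4a+1};q^{4a+1})_{\infty}$ and $(u^{-1}q^{2a},uq^{2a+1},q^{4a+1};q^{4a+1})_{\infty}$ on the right-hand side of \eqref{eq-thm-J-3}, and the surviving prefactor is $1/(q;q)_{\infty}$.

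The main obstacle is conceptual rather than computational: recognizing that the ``wrongly paired'' product $(-v/z,-z/v;q)_{\infty}$ must first be factored through \eqref{Jacobi} before the contour integral is evaluated, and that it is exactly the residual factor $1+z/v$ that splits the answer into the characteristic sum of \emph{two} theta functions while raising the modulus from $4a$ to $4a+1$. Once this is seen, the remaining task is the routine but slightly delicate exponent bookkeeping needed to match both emergent series with the stated products. As an independent cross-check, the same result can be obtained by summing over a single index in the spirit of Remark~\ref{rem-sec3}: fixing $m=i-j$ reduces the inner sum to $\sum_{j}q^{j^{2}+(m-1)j}/((q;q)_{j}(q;q)_{j+m})$, which the Durfee rectangle identity evaluates to $(1+q^{m})/(q;q)_{\infty}$, and reassembling over $m\in\mathbb{Z}$ reproduces the same two-theta expression.
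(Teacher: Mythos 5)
Your proposal is correct; I verified both steps, including the exponent identity $\tfrac{1}{2}(i^{2}-i+j^{2}-j+4a(i-j)^{2})=\binom{i}{2}+\binom{j}{2}+4a\binom{i-j}{2}+2a(i-j)$, the constant-term extraction giving the stated contour integral, and the final pairing $m=n$ and $m=n-1$ which (after the shift $n\mapsto n+1$ rather than $n\mapsto -n$, but this is immaterial) produces exactly the two theta products $(uq^{2a},u^{-1}q^{2a+1},q^{4a+1};q^{4a+1})_\infty$ and $(u^{-1}q^{2a},uq^{2a+1},q^{4a+1};q^{4a+1})_\infty$. Your route is the same two-step integral method as the paper's, and it isolates the identical key mechanism — an ``almost theta'' whose two arguments multiply to $1$ instead of $q$, repaired by peeling off one linear factor, which is precisely what splits the answer into two theta functions of modulus $4a+1$ — but your integrand is genuinely different from the paper's. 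The paper starts from $(uz^{2},1/uz^{2};q)_{\infty}(q^{a/2}z,q^{a/2}/z;q^{a})_{\infty}$, Euler-expands the two base-$q$ factors in $z^{\pm 2}$, and peels off $(1-uz^{2})$, so the two terms arise from the parity constraints $j=2i$ and $j=2i-2$ against a base-$q^{a}$ theta; you instead encode the difference $i-j$ directly through a base-$q^{4a}$ theta in $z$ and peel off $(1+z/v)$ with $v=uq^{2a}$. Your version avoids the $z^{2}$ bookkeeping and makes the modulus $4a+1$ appear as $4a+1$ on the nose, at the cost of the preliminary substitution $v=uq^{2a}$; the paper's version keeps the parameter $a$ at its ``natural'' scale and generalizes more visibly to the half-integer shifts used elsewhere in that section. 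Your closing cross-check via Durfee rectangles is plausible but is stated without proof of the evaluation $\sum_{j}q^{j^{2}+(m-1)j}/((q;q)_{j}(q;q)_{j+m})=(1+q^{m})/(q;q)_{\infty}$; since it is only offered as a sanity check, this does not affect the validity of the main argument.
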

\begin{proof}
By the Jacobi triple product identity, we have
\begin{align*}
 &(q;q)_{\infty}(q^{a};q^{a})_{\infty}\oint  (uz^{2},1/uz^{2};q)_{\infty}(q^{a/2}z,q^{a/2}/z;q^{a})_{\infty} \frac{dz}{2\pi iz}\\
 &= \oint  (1-uz^{2}) \sum_{i,j=-\infty}^{\infty}(-1/uz^{2})^{i}q^{(i^{2}-i)/2}(-q^{a/2}z)^{j}q^{a(j^{2}-j)/2} \frac{dz}{2\pi iz}  \\
 &= \oint  \Big(\sum_{i,j=-\infty}^{\infty}(-1/uz^{2})^{i}q^{(i^{2}-i)/2}(-q^{a/2}z)^{j}q^{a(j^{2}-j)/2} \\ &\quad -uz^{2}\sum_{i,j=-\infty}^{\infty}(-1/uz^{2})^{i}q^{(i^{2}-i)/2}(-q^{a/2}z)^{j}q^{a(j^{2}-j)/2} \Big)\frac{dz}{2\pi iz} \\
 &=\sum_{i=-\infty}^{\infty} \big((-1)^{i}u^{-i}q^{((4a+1)i^{2}-i)/2}+(-1)^{i}u^{-i}q^{((4a+1)i^{2}+i)/2}\big)  \qquad \\
 &=(u^{-1}q^{2a},uq^{2a+1},q^{4a+1};q^{4a+1})_{\infty}+ (uq^{2a},u^{-1}q^{2a+1},q^{4a+1};q^{4a+1})_{\infty}.
\end{align*}
Here the third equality follows, since in the first sum, only the terms with $j=2i$ contributes to the integral, and in the second sum, only the terms with $j=2i-2$ contributes to the integral. We have also replaced $i$ by $i+1$ in the outcome of the integral of the second sum.

By \eqref{Euler} and \eqref{Jacobi}, we see that the left side of the above identity is
\begin{align*}
 LHS&=(q;q)_{\infty}\oint  \sum_{i,j\geq0}\sum_{k= -\infty}^{\infty}\frac{(-uz^{2})^{i}q^{(i^{2}-i)/2} (-1/uz^{2})^{j} q^{(j^{2}-j)/2} (-q^{a/2}/z)^{k}q^{a(k^{2}-k)/2}}{(q;q)_{i}(q;q)_{j}} \frac{dz}{2\pi iz}\\
 &=(q;q)_{\infty}
 \sum_{i,j\geq 0}\frac{(-1)^{i+j}u^{i-j}q^{(i^{2}-i+j^{2}-j+4a(i-j)^{2})/2}}{(q;q)_{i}(q;q)_{j}}.
\end{align*}
This proves the theorem.
\end{proof}

If we set $u=\pm 1$, $q^{2a}$ and $q^{2a+1}$ in Theorem \ref{thm-J-3}, we obtain the following corollary.
\begin{corollary}\label{cor-J-4}
We have
\begin{align}\label{eq-J-3}
\sum_{i,j\geq0}\frac{(-1)^{i+j}q^{(i^{2}-i+j^{2}-j+4a(i-j)^{2})/2}}{(q;q)_{i}(q;q)_{j}}&=\frac{2(q^{2a},q^{2a+1},q^{4a+1};q^{4a+1})_{\infty}}{(q;q)_{\infty}}, \\
\sum_{i,j\geq0}\frac{q^{(i^{2}-i+j^{2}-j+4a(i-j)^{2})/2}}{(q;q)_{i}(q;q)_{j}}&=\frac{2(-q^{2a},-q^{2a+1},q^{4a+1};q^{4a+1})_{\infty}}{(q;q)_{\infty}}, \\
\sum_{i,j\geq0}\frac{(-1)^{i+j}q^{2a(i-j)}q^{(i^{2}-i+j^{2}-j+4a(i-j)^{2})/2}}{(q;q)_{i}(q;q)_{j}}&=\frac{(q,q^{4a},q^{4a+1};q^{4a+1})_\infty}{(q;q)_\infty}, \\
\sum_{i,j\geq0}\frac{(-1)^{i+j}q^{(2a+1)(i-j)}q^{(i^{2}-i+j^{2}-j+4a(i-j)^{2})/2}}{(q;q)_{i}(q;q)_{j}}&=\frac{(q^{-1},q^{4a+2},q^{4a+1};q^{4a+1})_\infty}{(q;q)_\infty}.
\end{align}
\end{corollary}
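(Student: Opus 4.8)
The plan is to obtain all four identities as direct specializations of Theorem~\ref{thm-J-3}, so the entire argument reduces to substituting four values of $u$ into \eqref{eq-thm-J-3} and simplifying. On the left side, the only place $u$ appears is in the factor $(-1)^{i+j}u^{i-j}$, so I first record its effect for each choice. For $u=1$ this factor is $(-1)^{i+j}$, reproducing the summand of the first identity; for $u=-1$ it becomes $(-1)^{i+j}(-1)^{i-j}=(-1)^{2i}=1$, which removes the sign and gives the summand of the second identity; for $u=q^{2a}$ and $u=q^{2a+1}$ it becomes $(-1)^{i+j}q^{2a(i-j)}$ and $(-1)^{i+j}q^{(2a+1)(i-j)}$ respectively, matching the third and fourth summands. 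Thus each left side is immediate, and all the content lies in the right side.

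The only real bookkeeping is on the right side, where I must simplify the two-term numerator
$$(u^{-1}q^{2a},uq^{2a+1},q^{4a+1};q^{4a+1})_{\infty}+(uq^{2a},u^{-1}q^{2a+1},q^{4a+1};q^{4a+1})_{\infty}.$$
For $u=\pm 1$ we have $u^{-1}=u$, so the two products coincide and their sum is twice a single product; substituting $u=1$ and $u=-1$ then produces the factors $2(q^{2a},q^{2a+1},q^{4a+1};q^{4a+1})_\infty$ and $2(-q^{2a},-q^{2a+1},q^{4a+1};q^{4a+1})_\infty$ of the first two identities.

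The mechanism that makes the last two identities work is that one of the two products collapses to zero, so the numerator reduces to a single theta product. For $u=q^{2a}$ the first product contains the factor $(u^{-1}q^{2a};q^{4a+1})_\infty=(1;q^{4a+1})_\infty=0$, so only the second product survives, and it equals $(q^{4a},q,q^{4a+1};q^{4a+1})_\infty$, giving the third identity. For $u=q^{2a+1}$ the second product contains $(uq^{2a};q^{4a+1})_\infty=(q^{4a+1};q^{4a+1})_\infty=0$, so only the first product survives, yielding $(q^{-1},q^{4a+2},q^{4a+1};q^{4a+1})_\infty$, the fourth identity. Since the whole proof is a substitution followed by elementary simplification, there is no genuine obstacle; the one point to watch is spotting these vanishing factors $(1;q^{4a+1})_\infty$ and $(q^{4a+1};q^{4a+1})_\infty$, which is precisely what selects a single term in each of the last two cases.
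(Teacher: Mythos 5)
Your proposal follows exactly the paper's route: the corollary is obtained by substituting $u=1,\,-1,\,q^{2a},\,q^{2a+1}$ into Theorem \ref{thm-J-3}, and your handling of the left sides and of the first three cases is correct. One slip in the fourth case: $(q^{4a+1};q^{4a+1})_{\infty}=\prod_{k\geq 1}(1-q^{(4a+1)k})$ is \emph{not} zero, so the factor $(uq^{2a};q^{4a+1})_\infty=(q^{4a+1};q^{4a+1})_\infty$ you point to does not kill the second product; what actually vanishes there is $(u^{-1}q^{2a+1};q^{4a+1})_\infty=(1;q^{4a+1})_\infty=0$. With that correction the second product still collapses and the conclusion stands unchanged.
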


Setting $a=2$ and $a=3$ in the first two identities in Corollary \ref{cor-J-4}, we obtain
\begin{align}
\sum_{i,j\geq 0}\frac{(-1)^{i+j}q^{(i^{2}-i+j^{2}-j+8(i-j)^{2})/2}}{(q;q)_{i}(q;q)_{j}}&= \frac{2(q^{4},q^{5},q^{9};q^{9})_{\infty}}{(q;q)_{\infty}}, \\
\sum_{i,j\geq 0}\frac{(-1)^{i+j}q^{(i^{2}-i+j^{2}-j+12(i-j)^{2})/2}}{(q;q)_{i}(q;q)_{j}}&=\frac{2(q^{6},q^{7},q^{13};q^{13})_{\infty}}{(q;q)_{\infty}}, \\
\sum_{i,j\geq 0}\frac{q^{(i^{2}-i+j^{2}-j+8(i-j)^{2})/2}}{(q;q)_{i}(q;q)_{j}}&= \frac{2(-q^{4},-q^{5},q^{9};q^{9})_{\infty}}{(q;q)_{\infty}}, \\
\sum_{i,j\geq 0}\frac{q^{(i^{2}-i+j^{2}-j+12(i-j)^{2})/2}}{(q;q)_{i}(q;q)_{j}}&=\frac{2(-q^{6},-q^{7},q^{13};q^{13})_{\infty}}{(q;q)_{\infty}}.
\end{align}

\subsection{Identities of index $(1,2)$}

\begin{theorem}\label{thm-R-5}
We have
\begin{align}
\sum_{i,j\geq0}\frac{(-1)^{i}u^{i+j}q^{i^2+2ij+2j^2-i-j}}{(q;q)_{i}(q^{2};q^{2})_{j}}=(u;q^{2})_{\infty}, \label{eq-R-5a} \\
\sum_{i,j\geq0}\frac{(-1)^{i} u^{i+2j}q^{i^2+2ij+2j^2-i-j}}{(q;q)_{i}(q^{2};q^{2})_{j}}=(u;q)_{\infty}. \label{eq-R-5b}
\end{align}
\end{theorem}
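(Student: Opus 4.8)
The plan is to follow the two-step integral method of the paper. The key common observation is that the quadratic form in the exponent factors as a sum of two binomial (triangular) numbers,
\[
i^2+2ij+2j^2-i-j=\binom{i}{2}+\binom{i+2j}{2},
\]
which dictates the linearization. For \eqref{eq-R-5a} I would expand $(-u^{1/2}z;q)_\infty$ by the second identity in \eqref{Euler} to produce the $i$-sum with weight $q^{\binom{i}{2}}(u^{1/2}z)^i/(q;q)_i$, expand $1/(z^2;q^2)_\infty$ by the first identity in \eqref{Euler} at base $q^2$ to produce the $j$-sum with weight $z^{2j}/(q^2;q^2)_j$, and expand the Jacobi product $(q,u^{1/2}/z,qz/u^{1/2};q)_\infty$ by \eqref{Jacobi} to produce a bilateral $k$-sum with weight $(-1)^kq^{\binom{k}{2}}(u^{1/2}/z)^k$. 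Extracting the constant term via \eqref{int-constant} forces $k=i+2j$; then $\binom{i}{2}+\binom{i+2j}{2}$ gives the exponent, $(-1)^{i+2j}=(-1)^i$ gives the sign, and $u^{i/2}\cdot u^{(i+2j)/2}=u^{i+j}$ gives the correct power of $u$. This realizes Step 1 as
\[
\sum_{i,j\ge0}\frac{(-1)^{i}u^{i+j}q^{i^2+2ij+2j^2-i-j}}{(q;q)_i(q^2;q^2)_j}=(q;q)_\infty\oint\frac{(-u^{1/2}z,u^{1/2}/z,qz/u^{1/2};q)_\infty}{(z^2;q^2)_\infty}\frac{dz}{2\pi iz}.
\]

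For \eqref{eq-R-5a}, Step 2 is to evaluate this integral with \eqref{Prop32-proof}. I would match the integrand by taking $t=u^{1/2}$, so that $qz/t$ and $t/z$ become the two positive- and negative-power Jacobi factors, and then $a=1$, $b=0$, $c=-u^{1/2}$, $d=0$, using $(z^2;q^2)_\infty=(z,-z;q)_\infty$ to identify the denominator $(az,cz/t;q)_\infty=(z,-z;q)_\infty$ (the $b=0$ and $d=0$ factors being trivial) and the numerator $(cz,qz/t,t/z;q)_\infty=(-u^{1/2}z,qz/u^{1/2},u^{1/2}/z;q)_\infty$. The choice $a=1$ truncates the ${}_3\phi_2$ on the right of \eqref{Prop32-proof} to its $n=0$ term (value $1$), and the prefactor collapses to $(u^{1/2};q)_\infty(-u^{1/2};q)_\infty/(q;q)_\infty=(u;q^2)_\infty/(q;q)_\infty$. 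Multiplying by $(q;q)_\infty$ yields $(u;q^2)_\infty$, as claimed.

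For \eqref{eq-R-5b} the exponent and sign are identical, so only the placement of the powers of $u$ changes: now the full factor $u^{i+2j}=u^k$ should come from the Jacobi product alone. I would therefore take the $i$-sum from $(-z;q)_\infty$ and the $j$-sum from $1/(z^2;q^2)_\infty$ (both free of $u$) together with Jacobi factors $u/z,qz/u$, giving the representation $(q;q)_\infty\oint(-z,u/z,qz/u;q)_\infty/(z^2;q^2)_\infty\,dz/2\pi iz$. Here Step 2 is elementary rather than hypergeometric: since $(z^2;q^2)_\infty=(z,-z;q)_\infty$, the numerator factor $(-z;q)_\infty$ cancels and the integrand reduces to $(q;q)_\infty(u/z,qz/u;q)_\infty/(z;q)_\infty$. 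Expanding $1/(z;q)_\infty$ by \eqref{Euler} and the Jacobi product by \eqref{Jacobi} and taking the constant term collapses everything to $\sum_{k\ge0}(-1)^kq^{\binom{k}{2}}u^k/(q;q)_k=(u;q)_\infty$, again by \eqref{Euler}.

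The genuinely delicate point is Step 2 for \eqref{eq-R-5a}: recognizing that \eqref{Prop32-proof} applies only after a degenerate-looking specialization ($b=d=0$, $a=1$), and checking that these limits are harmless, namely that the resulting factors $(0;q)_\infty=1$ cause no trouble while $a=1$ forces the series to terminate rather than to blow up. A secondary point to verify is that the spurious half-integer powers of $u$ introduced by $u^{1/2}$ cancel in the end, as they must since $(u^{1/2};q)_\infty(-u^{1/2};q)_\infty=(u;q^2)_\infty$ is a genuine power series in $u$; this is precisely the phenomenon flagged in the introduction about apparent non-integral $q$-powers. As a consistency check I would compare the coefficient of $u^{1}$ on both sides, where \eqref{eq-R-5a} reduces to $-1/(1-q^2)$ and \eqref{eq-R-5b} to $-1/(1-q)$.
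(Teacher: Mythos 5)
Your proposal is correct for both identities, and while it is the same integral method in spirit, the evaluations differ from the paper's in ways worth noting. The paper derives both parts from its identity \eqref{eq2.1} (that is, \eqref{R32} with $\alpha_1=\beta_2$ collapsed by the $q$-binomial theorem): for \eqref{eq-R-5a} it sets $\beta_1=-\beta_3$ there, which after the rescaling $z\mapsto u^{1/2}z$ is exactly your integral $\oint (-u^{1/2}z,u^{1/2}/z,qz/u^{1/2};q)_\infty (z^2;q^2)_\infty^{-1}\,dz/2\pi iz$; you instead evaluate it by the degenerate specialization $a=1$, $b=d=0$, $c=-t=-u^{1/2}$ of \eqref{Prop32-proof}, where $(1;q)_n=0$ kills the ${}_3\phi_2$ beyond $n=0$. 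That specialization is legitimate (both sides are analytic in $a$ on a disc containing $a=1$ once the contour is taken just inside the unit circle), and it is the same kind of degeneration the paper itself performs in the proof of Theorem \ref{thm-R-1}. For \eqref{eq-R-5b} your route genuinely diverges: the paper replaces $q$ by $q^2$ in \eqref{eq2.1} and sets $\beta_3=\beta_1 q$ so that $(\beta_1z;q^2)_\infty(\beta_1qz;q^2)_\infty=(\beta_1z;q)_\infty$, i.e.\ it groups the exponent as $j^2+(i+j)^2-i-j$ with a base-$q^2$ theta; you keep base $q$, group the exponent as $\binom{i}{2}+\binom{i+2j}{2}$, and then observe that $(-z;q)_\infty$ cancels half of $(z^2;q^2)_\infty=(z,-z;q)_\infty$, reducing Step 2 to a bare Euler--Jacobi constant-term computation with no ${}_r\phi_s$ summation at all. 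Your version of \eqref{eq-R-5b} is arguably the more elementary of the two; the paper's version has the mild advantage of reusing the single lemma \eqref{eq2.1} for both parts (and for Theorems \ref{thm-R-4} and \ref{thm-R-6} as well). All of your verifications, including the factorization $i^2+2ij+2j^2-i-j=\binom{i}{2}+\binom{i+2j}{2}$, the identity $(u^{1/2},-u^{1/2};q)_\infty=(u;q^2)_\infty$, and the coefficient checks at $u^1$, are accurate.
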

\begin{proof}
Setting $\alpha_{1}=\beta_{2}$ in \eqref{R32} and using \eqref{q-binomial}, we deduce that
\begin{align}\label{eq2.1}
\oint  \frac{(\beta_{1}\beta_{3}z,qz,1/z;q)_{\infty}}{(\beta_{1}z,\beta_{3}z;q)_{\infty}}\frac{dz}{2\pi iz}&=\frac{(\beta_1,\beta_2/\beta_1;q)_\infty}{(q;q)_\infty} \sum_{n=0}^\infty \frac{(\beta_1\beta_3/\beta_2;q)_n}{(q;q)_n}\left(\frac{\beta_2}{\beta_1}\right)^n \nonumber \\
&=\frac{(\beta_{1},\beta_{3};q)_{\infty}}{(q;q)_{\infty}}.
\end{align}
Setting $\beta_{1}=-\beta_{3}$ in \eqref{eq2.1}, we obtain
\begin{align}\label{L-constant}
 (q;q)_{\infty}\oint \frac{(-\beta_{1}^{2}z,qz,1/z;q)_{\infty}}{(\beta_{1}^{2}z^{2};q^{2})_{\infty}}\frac{dz}{2\pi iz}
 = (\beta_{1}^{2};q^{2})_{\infty}.
\end{align}
By \eqref{Euler} and \eqref{Jacobi}, we see that its left side is
\begin{align*}
 LHS&=\oint \sum_{i,j\geq0}\sum_{k= -\infty}^{\infty}\frac{(\beta_{1}^{2}z)^{i}q^{(i^{2}-i)/2} (\beta_{1}^{2}z^{2})^{j} (-1/z)^{k}q^{(k^{2}-k)/2} }{(q;q)_{i}(q^{2};q^{2})_{j}} \frac{dz}{2\pi iz}\\
 &=\sum_{i,j\geq 0}\frac{(-1)^{i}\beta_{1}^{2i+2j}q^{(i^{2}+(i+2j)^{2}-2i-2j)/2}}{(q;q)_{i}(q^{2};q^{2})_{j}}.
\end{align*}
This proves \eqref{eq-R-5a} after replacing $\beta_1^2$ by $u$.

Replacing $q$ by $q^{2}$ in \eqref{eq2.1} and setting $\beta_{3}=\beta_{1}q$, we obtain
\begin{align*}
 (q^{2};q^{2})_{\infty}\oint  \frac{(\beta_{1}^{2}qz,q^{2}z,1/z;q^{2})_{\infty}}{(\beta_{1}z;q)_{\infty}}\frac{dz}{2\pi iz}
 = (\beta_{1};q)_{\infty}.
\end{align*}
By \eqref{Euler} and \eqref{Jacobi}, we see that its left side is
\begin{align*}
 LHS&=\oint  \sum_{i,j\geq 0} \sum_{k= -\infty}^{\infty}\frac{(\beta_{1}z)^{i} (-\beta_{1}^{2}qz)^{j}q^{j^{2}-j} (-1/z)^{k}q^{k^{2}-k} }{(q;q)_{i}(q^{2};q^{2})_{j}} \frac{dz}{2\pi iz}\\
 &=\sum_{i,j\geq 0}\frac{(-1)^{i}\beta_{1}^{i+2j}q^{j^{2}+(i+j)^{2}-i-j}}{(q;q)_{i}(q^{2};q^{2})_{j}}.
\end{align*}
This proves \eqref{eq-R-5b} after replacing $\beta_1$ by $u$.
\end{proof}

For example, if we set $u=q$ and $q^{2}$ in \eqref{eq-R-5a}, we obtain
\begin{align}
\sum_{i,j\geq 0}\frac{(-1)^{i}q^{i^{2}+2ij+2j^2}}{(q;q)_{i}(q^{2};q^{2})_{j}}&=(q;q^{2})_{\infty}, \label{add-12-1}\\
\sum_{i,j\geq 0}\frac{(-1)^{i}q^{i^{2}+2ij+2j^2+i+j}}{(q;q)_{i}(q^{2};q^{2})_{j}}&=(q^{2};q^{2})_{\infty}. \label{add-12-2}
\end{align}
If we set $u=q$ and $-q$ in \eqref{eq-R-5b}, we obtain
\begin{align}
\sum_{i,j\geq 0}\frac{(-1)^{i}q^{i^{2}+2ij+2j^{2}+j}}{(q;q)_{i}(q^{2};q^{2})_{j}}&= (q;q)_{\infty}, \label{add-12-3} \\
 \sum_{i,j\geq 0}\frac{q^{i^{2}+2ij+2j^{2}+j}}{(q;q)_{i}(q^{2};q^{2})_{j}}&=\frac{1}{(q;q^{2})_{\infty}}. \label{add-12-4}
\end{align}
Note that \eqref{add-12-4} recovers \cite[Eq.\ (1.20)]{Wang} and hence \eqref{eq-R-5b} can be viewed as a generalization of it.

\begin{rem}
The identity \eqref{eq-R-5a} can also be deduced from the following identity in Lovejoy's work \cite[Eq.\ (1.7)]{Lovejoy2006}:
\begin{align}\label{Lovejoy-constant-eq}
    [z^0]\frac{(-azq,-zq,-1/z;q)_\infty}{(-aqz^2;q^2)_\infty}=(-aq;q^2)_\infty.
\end{align}
Indeed, after setting $aq=-\beta_1^2$ and replacing $z$ by $-z$, we see that this identity is equivalent to \eqref{L-constant}. Lovejoy \cite{Lovejoy2006} also provided a  partition interpretation to \eqref{Lovejoy-constant-eq} and hence the identity \eqref{eq-R-5a} can also be explained as a partition identity.
\end{rem}


\section{Identities involving triple sums}\label{sec-triple}

In this section, we will establish Rogers-Ramanujan type identities involving triple sums.

\subsection{Identities of index $(1,1,1)$}

\begin{theorem}\label{thm-R-4}
We have
\begin{align}\label{eq-111}
\sum_{i,j,k\geq0}\frac{(-1)^{j+k}\beta_{1}^{i+j}\beta_{3}^{i+k}q^{(i^{2}+(i+j+k)^{2}-2i-j-k)/2}}{(q;q)_{i}(q;q)_{j}(q;q)_{k}}=(\beta_{1},\beta_{3};q)_{\infty}.
\end{align}
\end{theorem}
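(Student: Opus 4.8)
The plan is to reuse the integral evaluation \eqref{eq2.1} that was obtained while proving Theorem \ref{thm-R-5}, but to exploit it differently. In that earlier proof the two denominator factors were forced to coincide by the specialization $\beta_1=-\beta_3$, which collapsed them into a single $q^2$-factor and produced an index $(1,2)$ sum; here I keep $\beta_1$ and $\beta_3$ independent and expand every factor, which should produce a genuine index $(1,1,1)$ triple sum. Since multiplying \eqref{eq2.1} through by $(q;q)_\infty$ already delivers the right-hand side $(\beta_1,\beta_3;q)_\infty$ verbatim, Step 2 of the integral method costs nothing, and the whole theorem reduces to checking that the integrand, once expanded, is the claimed summand. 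So I would begin from
\begin{align}
(q;q)_\infty\oint \frac{(\beta_1\beta_3 z,qz,1/z;q)_\infty}{(\beta_1 z,\beta_3 z;q)_\infty}\frac{dz}{2\pi iz}=(\beta_1,\beta_3;q)_\infty.
\end{align}

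Next I would expand each factor of the integrand by the appropriate elementary identity. The two denominators are opened by the first identity in \eqref{Euler}, contributing $\sum_{j\geq0}(\beta_1 z)^j/(q;q)_j$ and $\sum_{k\geq0}(\beta_3 z)^k/(q;q)_k$. The numerator factor $(\beta_1\beta_3 z;q)_\infty$ is opened by the second identity in \eqref{Euler}, contributing $\sum_{i\geq0}q^{\binom{i}{2}}(-\beta_1\beta_3 z)^i/(q;q)_i$. The remaining factor is treated exactly as in the proofs of Theorems \ref{thm-R-5} and \ref{thm-T11}: by the Jacobi triple product identity \eqref{Jacobi},
\begin{align}
(q;q)_\infty(qz,1/z;q)_\infty=\sum_{n=-\infty}^\infty(-1/z)^n q^{(n^2-n)/2}.
\end{align}
Multiplying the four expansions and extracting the constant term in $z$ via \eqref{int-constant} is the engine of the argument: the total exponent of $z$ is $i+j+k-n$, so the only surviving contribution from the bilateral sum is $n=i+j+k$, a value that is automatically attained since $i,j,k\geq0$.

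After substituting $n=i+j+k$, the final task is pure bookkeeping, and this is where I expect whatever mild friction there is to occur. The sign collapses cleanly, $(-\beta_1\beta_3)^i(-1)^{i+j+k}=(-1)^{j+k}\beta_1^i\beta_3^i$, which combines with the denominator monomials to give the weights $\beta_1^{i+j}$, $\beta_3^{i+k}$ and the sign $(-1)^{j+k}$ of \eqref{eq-111}. The one computation worth doing carefully is the quadratic exponent: the two triangular numbers assemble as
\begin{align}
\binom{i}{2}+\binom{i+j+k}{2}=\frac{i^2+(i+j+k)^2-2i-j-k}{2},
\end{align}
which is precisely the exponent of $q$ in the statement, so matching the pieces term by term then yields \eqref{eq-111}. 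I would note that a direct summation in the spirit of Remark \ref{rem-sec3} is less convenient here, because the cross term $(i+j+k)^2$ couples all three indices; the role of the integral is exactly to decouple this quadratic form through the Jacobi triple product, which is why the integral method is the natural route.
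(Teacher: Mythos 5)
Your proposal is correct and follows essentially the same route as the paper: the paper's proof of Theorem \ref{thm-R-4} likewise recalls \eqref{eq2.1}, expands the two denominator factors by \eqref{Euler}, the factor $(\beta_1\beta_3 z;q)_\infty$ by the second Euler identity, and $(q;q)_\infty(qz,1/z;q)_\infty$ by \eqref{Jacobi}, then extracts the constant term at $l=i+j+k$. Your sign and exponent bookkeeping, including $\binom{i}{2}+\binom{i+j+k}{2}=\tfrac{1}{2}\bigl(i^2+(i+j+k)^2-2i-j-k\bigr)$, matches the paper exactly.
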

\begin{proof}
Recall the identity \eqref{eq2.1}. By \eqref{Euler} and \eqref{Jacobi}, we see that its left side is
\begin{align*}
 LHS&=\frac{1}{(q;q)_{\infty}}\oint \sum_{i,j,k\geq0}\sum_{l= -\infty}^{\infty}\frac{(-\beta_{1}\beta_{3}z)^{i}q^{(i^{2}-i)/2} (\beta_{1}z)^{j} (\beta_{3}z)^{k} (-1/z)^{l}q^{(l^{2}-l)/2}}{(q;q)_{i}(q;q)_{j}(q;q)_{k}} \frac{dz}{2\pi iz}\\
 &=\sum_{i,j,k\geq0}\frac{(-1)^{j+k}\beta_{1}^{i+j}\beta_{3}^{i+k}q^{(i^{2}+(i+j+k)^{2}-2i-j-k)/2}}{(q;q)_{i}(q;q)_{j}(q;q)_{k}}.
\end{align*}
This proves the theorem.
\end{proof}

For example, if we set  $\beta_{1}=-q^{1/4}$, $\beta_{3}=-q^{1/2}$ and replace $q$ by $q^4$, we obtain
\begin{align}
 \sum_{i,j,k\geq0}\frac{q^{2i^{2}+2(i+j+k)^{2}-i-j}}{(q^4;q^4)_{i}(q^4;q^4)_{j}(q^4;q^4)_{k}}= \frac{(q^4;q^{8})_{\infty}}{(q;q^4)_{\infty}(q^{6};q^{8})_{\infty}}.
\end{align}

\begin{rem}\label{rem-111}
The identity \eqref{eq-111} appeared in Lovejoy's work \cite{Lovejoy2017} and therein is viewed as a generalization of a partition theorem of  Schur. See Section \ref{sec-concluding} for more discussion.
\end{rem}

\subsection{Identities of index $(1,1,2)$}


\begin{theorem}\label{thm-R-3}
We have
\begin{align}
\sum_{i,j,k\geq0}\frac{(-1)^{i+j}b^{-i+j}c^{i-j+k}q^{(i^{2}+(i-j+2k)^{2}-2i+3j-2k)/2}}{(q;q)_{i}(q;q)_{j}(q^{2};q^{2})_{k}}=\frac{(-q,bq^{2}/c;q)_{\infty}(bq,c/b;q^{2})_{\infty}}
{(b^{2}q^{2}/c;q^{2})_{\infty}}.
\end{align}
\end{theorem}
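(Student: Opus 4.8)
The plan is to prove Theorem \ref{thm-R-3} by the integral method, along the lines of the proofs of Theorems \ref{thm-R-4} and \ref{thm-R-5}. The first move is to identify the right combinatorial shape of the summand by completing the square in the exponent. A direct check gives
$(i^{2}+(i-j+2k)^{2}-2i+3j-2k)/2=\binom{i}{2}+\binom{i-j+2k}{2}+j$,
so that only $i$ and the combined index $n:=i-j+2k$ carry a triangular exponent. This dictates how to introduce the contour variable $z$ in Step 1. By Euler's second identity in \eqref{Euler}, the factor $q^{\binom{i}{2}}/(q;q)_{i}$ is produced by expanding a numerator product $(\,\cdot\,z;q)_{\infty}$ (contributing $z^{+i}$); by Euler's first identity the factor $1/(q;q)_{j}$ comes from a base-$q$ denominator product $1/(\,\cdot\,/z;q)_{\infty}$ (contributing $z^{-j}$); the factor $1/(q^{2};q^{2})_{k}$ comes from a base-$q^{2}$ denominator product $1/(\,\cdot\,z^{2};q^{2})_{\infty}$ (contributing $z^{+2k}$, which is exactly the source of the $2k$ in $n$); and the triangular factor $q^{\binom{n}{2}}$ in the combined index comes from the Jacobi triple product \eqref{Jacobi}. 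Extracting the constant term via \eqref{int-constant} forces $n=i-j+2k$ and reproduces the triple sum, reducing the identity to the evaluation of a single contour integral of a ratio of infinite products, one of whose denominator factors lives in base $q^{2}$.

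For Step 2 I would first pass everything to base $q$ via $(\beta z;q)_{\infty}(-\beta z;q)_{\infty}=(\beta^{2}z^{2};q^{2})_{\infty}$, the same manoeuvre that produced \eqref{L-constant} from \eqref{eq2.1}; this rewrites the base-$q^{2}$ denominator factor as a $\pm$-pair of base-$q$ factors and brings the integral into the scope of the evaluation formulas of Section \ref{sec-pre}. Since the right-hand side of the theorem is a genuine quotient of products in mixed bases $q$ and $q^{2}$ (note in particular the $(-q;q)_{\infty}$ and the denominator $(b^{2}q^{2}/c;q^{2})_{\infty}$), I expect the relevant evaluation to be \eqref{GR41010} collapsed by the $q$-Dixon sum \eqref{II13}, in direct analogy with the derivation of \eqref{Eq14} that was used to prove Theorem \ref{thm-T11}: the well-poised $\pm a^{1/2}$ pattern in \eqref{II13} is precisely what manufactures base-$q^{2}$ products on the right, and \eqref{Eq14} already produces a genuine quotient rather than a bare product. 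The pure-product formulas \eqref{GR4112}--\eqref{GR4113} are the natural fallbacks should a single application of \eqref{II13} not suffice. One then specializes the free parameters so that the resulting product equals $\frac{(-q,bq^{2}/c;q)_{\infty}(bq,c/b;q^{2})_{\infty}}{(b^{2}q^{2}/c;q^{2})_{\infty}}$ and reads off the two-parameter family in $b,c$.

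The genuinely hard part is the parameter bookkeeping in Step 2, not any single computation. One must choose the parameters of the chosen evaluation formula so that, simultaneously: the integrand matches the one produced in Step 1 (the $z$-exponents $+i,-j,+2k,-n$, the coefficients $b,c$, and the overall sign $(-1)^{i+j}$ must all agree); the $\pm$-pairing that generates the base-$q^{2}$ factor is available; and the balancing or well-poisedness hypotheses of \eqref{II13} (or of \eqref{GR4112}--\eqref{GR4113}) are met. It is easy to over-specialize and thereby collapse the result to a degenerate one-parameter special case, or to an already-known double-sum identity such as those following Theorem \ref{thm-R-5}; keeping both $b$ and $c$ free while still satisfying the summation hypotheses is the delicate point. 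Once the integral is evaluated in closed product form, the counter-intuitive fact noted in the introduction---that the left-hand side, seemingly a power series in $q^{1/4}$, is in fact a power series in $q$---is automatic, since the right-hand side is manifestly a series in $q$.
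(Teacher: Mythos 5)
Your Step 1 is correct and coincides with the paper's: the exponent does decompose as $\binom{i}{2}+\binom{i-j+2k}{2}+j$, and the triple sum is the constant term of an integrand with one free numerator factor $(cz;q)_\infty$ giving $q^{\binom{i}{2}}z^{i}/(q;q)_i$, a denominator factor $(\cdot/z;q)_\infty$ giving $z^{-j}/(q;q)_j$, a denominator factor $(b^2z^2;q^2)_\infty$ giving $z^{2k}/(q^2;q^2)_k$, and one Jacobi triple product pair carrying the combined index.

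The gap is in Step 2: the evaluation tools you commit to would not produce this identity. After splitting $(b^2z^2;q^2)_\infty=(bz,-bz;q)_\infty$, the integrand has only \emph{two} denominator factors of the form $(\cdot z;q)_\infty$ and, besides the theta pair, only \emph{one} free numerator $(\cdot z;q)_\infty$ factor. Forcing this into \eqref{GR41010} therefore requires $a_3=c_3$ and a further degeneration, which collapses the ${}_4\phi_3$ down to a ${}_2\phi_1$; the $q$-Dixon sum \eqref{II13} never becomes applicable. Your heuristic is also inverted: the right-hand sides of \eqref{II13} and of \eqref{Eq14} are quotients of base-$q$ products only, so the well-poised $\pm a^{1/2}$ pattern does \emph{not} manufacture base-$q^2$ products. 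The summation whose output is $(-q;q)_\infty$ times base-$q^2$ products is the Bailey--Daum sum \eqref{BD}, and the factor $(-q;q)_\infty$ in the theorem is precisely its fingerprint. The paper's proof takes \eqref{Prop32-proof} with $a=0$, $t=-c/b$, $d=-q/c$ (so that $(bz,-bz;q)_\infty$ reassembles into $(b^2z^2;q^2)_\infty$), which reduces the integral to a multiple of ${}_2\phi_1(b,bq/c;c;q,-c/b)$ --- exactly Bailey--Daum form, with both $b$ and $c$ surviving as free parameters. Your fallbacks \eqref{GR4112}--\eqref{GR4113} do not fit either: their integrands require two theta pairs in the numerator, whereas ours has one theta pair plus the genuinely free factor $(cz;q)_\infty$ (the route through \eqref{GR4112} with $\beta=-\alpha$ is what yields the different identity of Theorem \ref{thm-4112-1}, whose right-hand side is a pure product without the $(-q;q)_\infty$). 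So the missing idea is the correct pairing of integral formula and summation theorem, namely \eqref{Prop32-proof} together with \eqref{BD}; this is not mere bookkeeping, since the formula you chose cannot reach a right-hand side of the stated shape.
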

\begin{proof}
Setting $a=0,t=-c/b$ and $d=-q/c$ in \eqref{Prop32-proof}, by \eqref{BD} we have
\begin{align}
& (q;q)_{\infty}\oint \frac{(cz,-bqz/c,-c/bz;q)_{\infty}}{(b^{2}z^{2};q^{2})_{\infty}(-q/cz;q)_{\infty}}\frac{dz}{2\pi iz} \nonumber \\
& = \frac{(bq^{2}/c^{2},-c/b,c;q)_{\infty}}{(-bq/c,bq/c;q)_{\infty}}
 {}_2\phi _1\left(
 \begin{gathered}
b,bq/c\\
 c
 \end{gathered}
 ;q,-c/b
 \right) \nonumber \\
 &=\frac{(-q,bq^{2}/c^{2};q)_{\infty}(bq,c^{2}/b;q^{2})_{\infty}}
{(b^{2}q^{2}/c^{2};q^{2})_{\infty}}.
\end{align}
By \eqref{Euler} and \eqref{Jacobi}, its left side is
\begin{align*}
 LHS&=\oint \sum_{i,j,k\geq0}\sum_{l= -\infty}^{\infty}\frac{(-cz)^{i}q^{(i^{2}-i)/2} (-q/cz)^{j} (b^{2}z^{2})^{k} (c/bz)^{l}q^{(l^{2}-l)/2}}{(q;q)_{i}(q;q)_{j}(q^{2};q^{2})_{k}} \frac{dz}{2\pi iz} \\
 &=\sum_{i,j,k\geq0}\frac{(-1)^{i+j}c^{2i-2j+2k}b^{-i+j}q^{(i^{2}+(i-j+2k)^{2}-2i+3j-2k)/2}}{(q;q)_{i}(q;q)_{j}(q^{2};q^{2})_{k}}.
\end{align*}
Replacing $c^2$ by $c$, we prove the theorem.
\end{proof}
Setting $(b,c)=(q^{1/2},q^2)$, $(-q^{1/2},q^2)$ and $(q^{1/2},q)$ and replacing $q$ by $q^2$, we obtain
\begin{align}
\sum_{i,j,k\geq 0}\frac{(-1)^{i+j}q^{i^{2}+(i-j+2k)^{2}+i+2k}}{(q^2;q^2)_{i}(q^2;q^2)_{j}(q^4;q^4)_{k}}&= \frac{(q;q^2)_{\infty}(q^{3};q^{4})_{\infty}^{2}}{(q^2;q^{4})_{\infty}^{2}}, \\
 \sum_{i,j,k\geq 0}\frac{q^{i^{2}+(i-j+2k)^{2}+i+2k}}{(q^2;q^2)_{i}(q^2;q^2)_{j}(q^4;q^4)_{k}}&= \frac{(q^{6};q^{8})_{\infty}^{2}}{(q;q^2)_{\infty}(q^2;q^{4})_{\infty}(q^{3};q^{4})_{\infty}^{2}}, \\
 \sum_{i,j,k\geq 0}\frac{(-1)^{i+j}q^{i^{2}+(i-j+2k)^{2}-i+2j}}{(q^2;q^2)_{i}(q^2;q^2)_{j}(q^4;q^4)_{k}}&= \frac{(q,q^3;q^2)_{\infty}}{(q^2;q^2)_{\infty}}.
\end{align}

\begin{theorem}\label{thm-4112-1}
We have
\begin{align}\label{eq-4112-1}
\sum_{i,j,k\geq0}\frac{(-1)^{i}c^{2i-j+2k}d^{j}q^{(i^{2}+(i-j+2k)^{2}-2i+j-2k)/2}}{(q;q)_{i}(q;q)_{j}(q^{2};q^{2})_{k}}=\frac{(-d q/c;q)_{\infty}(c^{2};q^{2})_{\infty}}{(d^{2};q^{2})_{\infty}}.
\end{align}
\end{theorem}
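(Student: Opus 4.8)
The plan is to follow the two-step integral method used for Theorem \ref{thm-R-3}, since \eqref{eq-4112-1} has the same index $(1,1,2)$ shape and, crucially, a clean infinite-product right-hand side. So I would first guess an infinite product whose $z$-constant term reproduces the summand, and then evaluate the resulting contour integral with \eqref{Prop32-proof}.

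For Step 1, I would reverse-engineer the integrand from the denominator pattern $(q;q)_i(q;q)_j(q^2;q^2)_k$. By \eqref{Euler}, index $i$ should come from a numerator factor $(-cz;q)_\infty=\sum_i q^{\binom i2}(cz)^i/(q;q)_i$, index $j$ from a denominator factor $1/(-d/z;q)_\infty$, and index $k$ from a denominator factor $1/(z^2;q^2)_\infty$ (base $q^2$). The remaining numerator factors $(c/z;q)_\infty(qz/c;q)_\infty$, together with a $(q;q)_\infty$ that I factor out, assemble into a Jacobi triple product $(q,c/z,qz/c;q)_\infty=\sum_l(-1)^lq^{\binom l2}(c/z)^l$ via \eqref{Jacobi}. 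Extracting the $z^0$ coefficient forces $l=i-j+2k$, and a short check confirms that the combined sign and $c,d$-powers collapse to exactly $(-1)^i c^{2i-j+2k}d^j$, while $\binom i2+\binom l2$ produces the exponent $(i^2+(i-j+2k)^2-2i+j-2k)/2$. Thus the left side should equal $(q;q)_\infty\oint \frac{(-cz,c/z,qz/c;q)_\infty}{(z^2;q^2)_\infty(-d/z;q)_\infty}\frac{dz}{2\pi iz}$.

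For Step 2, I would evaluate this integral using \eqref{Prop32-proof} under the specialization $a=0$, $b=1$, $t=c$, with the formula's parameters $c,d$ replaced by $-c,-d$. A direct substitution turns the integrand of \eqref{Prop32-proof} into exactly the one above, using $(z;q)_\infty(-z;q)_\infty=(z^2;q^2)_\infty$, and turns the prefactor into $\frac{(-dq/c,c,-c;q)_\infty}{(q,-d,d;q)_\infty}$. The choice $a=0$ is what allows the two denominator factors $bz$ and $cz/t$ to fuse into $(z^2;q^2)_\infty$ without clashing with a numerator $abz$ factor, exactly as in the proof of Theorem \ref{thm-R-3}.

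The decisive simplification is that the accompanying ${}_3\phi_2$ has upper parameter $b=1$, so $(1;q)_n=0$ for every $n\geq1$ and the series collapses to its $n=0$ term, namely $1$; hence no genuine summation formula is needed here. Multiplying through by $(q;q)_\infty$ and using $(c;q)_\infty(-c;q)_\infty=(c^2;q^2)_\infty$ and $(d;q)_\infty(-d;q)_\infty=(d^2;q^2)_\infty$ then yields the asserted product $\frac{(-dq/c;q)_\infty(c^2;q^2)_\infty}{(d^2;q^2)_\infty}$. I expect the only real obstacle to be Step 1: spotting the correct infinite product and, in particular, realizing that the single Jacobi parameter $c$ must simultaneously supply the $c$-dependence of all three indices through $l=i-j+2k$. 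Once the integrand is identified, the specialization of \eqref{Prop32-proof} and the collapse of the series are routine.
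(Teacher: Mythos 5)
Your proof is correct, and it reaches the result by a slightly different route than the paper. Your Step 1 integrand is essentially the one the paper uses: the paper multiplies \eqref{GR4112} (specialized with $\beta=-\alpha$, $a=q/c\alpha$) by $(q;q)_\infty$ and obtains
\begin{equation*}
(q;q)_{\infty}\oint \frac{(-cz/\alpha,-q\alpha/cz,c\alpha/z;q)_{\infty}}{(bz;q)_{\infty}(\alpha^{2}/z^{2};q^{2})_{\infty}}\frac{dz}{2\pi iz},
\end{equation*}
which after $z\mapsto\alpha/z$, $d=\alpha b$, and the harmless substitution $(c,d)\mapsto(-c,-d)$ (under which both sides of \eqref{eq-4112-1} are invariant) is exactly your $(q;q)_\infty\oint \frac{(-cz,c/z,qz/c;q)_\infty}{(z^2;q^2)_\infty(-d/z;q)_\infty}\frac{dz}{2\pi iz}$. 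The difference lies in Step 2: the paper reads off the product directly from the closed-form evaluation \eqref{GR4112}, whereas you feed the integrand into \eqref{Prop32-proof} with $a=0$, $b=1$, $t=c$ and let the upper parameter $1$ annihilate every term of the ${}_3\phi_2$ past $n=0$. I checked your specialization: the prefactor is indeed $(-dq/c,c,-c;q)_\infty/(q,-d,d;q)_\infty$ and the series is $1$, so the computation closes. What your route buys is uniformity — the same lemma \eqref{Prop32-proof} that drives Theorems \ref{thm-R-1} and \ref{thm-R-3} also handles this case, with no summation theorem needed; what the paper's route buys is symmetry, since \eqref{GR4112} hands over the product with no series to discuss at all. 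The only point worth a sentence in a write-up is the legitimacy of $b=1$: it places a pole of $1/(z;q)_\infty$ at $z=1$, which by the paper's contour convention is deformed to the outside, and the identity at $b=1$ follows from the generic case by continuity; the paper itself uses the analogous degeneration $d_1=1$ in \eqref{Eq14}, so this is consistent with its conventions.
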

\begin{proof}
Setting $\beta=-\alpha$ and $a=q/c\alpha$ in \eqref{GR4112}, we obtain
\begin{align*}
(q;q)_{\infty}\oint \frac{(-cz/\alpha,-q\alpha/cz,c\alpha/z;q)_{\infty}}{(bz;q)_{\infty}(\alpha^{2}/z^{2};q^{2})_{\infty}}\frac{dz}{2\pi iz}
=\frac{(-b\alpha q/c;q)_{\infty}(c^{2};q^{2})_{\infty}}{(\alpha^{2}b^{2};q^{2})_{\infty}}.
\end{align*}
By \eqref{Euler} and \eqref{Jacobi} we see that its left side is
\begin{align*}
 LHS&=\oint \sum_{i,j,k\geq 0}\sum_{l= -\infty}^{\infty}\frac{(-c\alpha/z)^{i}q^{(i^{2}-i)/2} (bz)^{j} (\alpha^{2}/z^{2})^{k} (cz/\alpha)^{l}q^{(l^{2}-l)/2}}{(q;q)_{i}(q;q)_{j}(q^{2};q^{2})_{k}} \frac{dz}{2\pi iz}\\
 &=
 \sum_{i,j,k\geq0}\frac{(-1)^{i}c^{2i-j+2k}\alpha^{j}b^{j}q^{(i^{2}+(i-j+2k)^{2}-2i+j-2k)/2}}{(q;q)_{i}(q;q)_{j}(q^{2};q^{2})_{k}}.
\end{align*}
This proves the theorem after replacing $\alpha b$ by $d$.
\end{proof}

For example, if we replace $q$ by $q^4$ and set $(c,d)=(q^2,q)$ or $(q^2,q^3)$, we obtain
\begin{align}
\sum_{i,j,k\geq0}\frac{(-1)^{i}q^{2i^{2}+2(i-j+2k)^{2}+j}}{(q^{4};q^{4})_{i}(q^{4};q^{4})_{j}(q^{8};q^{8})_{k}}&= \frac{(q^{4},q^{6};q^{8})_{\infty}}{(q^{2},q^{3},q^{7};q^{8})_{\infty}}, \\
\sum_{i,j,k\geq0}\frac{(-1)^{i}q^{2i^{2}+2(i-j+2k)^{2}+3j}}{(q^{4};q^{4})_{i}(q^{4};q^{4})_{j}(q^{8};q^{8})_{k}}&= \frac{(q^{4},q^{10};q^{8})_{\infty}}{(q^{5},q^{6},q^{9};q^{8})_{\infty}}.
\end{align}

\subsection{Identities of index $(1,1,3)$}

\begin{theorem}\label{thm-R-6}
We have
\begin{align}\label{eq-R-6}
\sum_{i,j,k\geq0}\frac{(-1)^{k}u^{2i+j+3k}q^{(i^{2}+j^{2}+(i+j+3k)^{2}-2i-2j-3k)/2}}{(q;q)_{i}(q;q)_{j}(q^{3};q^{3})_{k}}=\frac{(u^{3};q^{3})_{\infty}}{(u;q)_{\infty}}.
\end{align}
\end{theorem}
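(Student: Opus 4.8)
The plan is to follow the two-step integral method, taking as the starting point the evaluation \eqref{eq2.1}, namely
$$\oint \frac{(\beta_1\beta_3 z, qz, 1/z;q)_\infty}{(\beta_1 z, \beta_3 z;q)_\infty}\frac{dz}{2\pi iz}=\frac{(\beta_1,\beta_3;q)_\infty}{(q;q)_\infty},$$
which holds for arbitrary $\beta_1,\beta_3$. The key idea is that the base $q^3$ in $(q^3;q^3)_k$ and the coefficient $3$ appearing in $i+j+3k$ on the left side of \eqref{eq-R-6} should both be manufactured by specializing $\beta_1,\beta_3$ to cube roots of unity times $u$. Concretely, I would set $\beta_1=\zeta_3 u$ and $\beta_3=\zeta_3^2 u$, so that $\beta_1\beta_3=u^2$, and exploit the cyclotomic factorization $(u^3z^3;q^3)_\infty=(uz,\zeta_3 uz,\zeta_3^2 uz;q)_\infty$. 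This rewrites the denominator $(\zeta_3 uz,\zeta_3^2 uz;q)_\infty$ as $(u^3z^3;q^3)_\infty/(uz;q)_\infty$, which is exactly what produces the factor $(q^3;q^3)_k$ in the summand.

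After multiplying \eqref{eq2.1} by $(q;q)_\infty$, the specialized left side becomes
$$(q;q)_\infty\oint\frac{(u^2 z, qz, 1/z;q)_\infty (uz;q)_\infty}{(u^3z^3;q^3)_\infty}\frac{dz}{2\pi iz}.$$
I would then expand this integrand by \eqref{Euler} and \eqref{Jacobi}: the factors $(u^2z;q)_\infty$ and $(uz;q)_\infty$ by the second Euler identity (producing $q^{\binom{i}{2}}(-u^2z)^i/(q;q)_i$ and $q^{\binom{j}{2}}(-uz)^j/(q;q)_j$), the reciprocal $1/(u^3z^3;q^3)_\infty$ by the first Euler identity with base $q^3$ (producing $u^{3k}z^{3k}/(q^3;q^3)_k$), and $(q;q)_\infty(qz,1/z;q)_\infty=\sum_l(-1)^lq^{\binom{l}{2}}z^{-l}$ by Jacobi. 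Extracting the constant term via \eqref{int-constant} forces $l=i+j+3k$, whereupon the total sign collapses to $(-1)^{2i+2j+3k}=(-1)^k$, the power of $u$ becomes $u^{2i+j+3k}$, and the $q$-exponent becomes $\binom{i}{2}+\binom{j}{2}+\binom{i+j+3k}{2}$, which is precisely $(i^2+j^2+(i+j+3k)^2-2i-2j-3k)/2$.

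On the right side, the same specialization gives $(q;q)_\infty\cdot(\zeta_3 u,\zeta_3^2 u;q)_\infty/(q;q)_\infty=(\zeta_3 u,\zeta_3^2 u;q)_\infty$, and the cyclotomic factorization again yields $(\zeta_3 u,\zeta_3^2 u;q)_\infty=(u^3;q^3)_\infty/(u;q)_\infty$, matching the claimed product. I expect the main point to be the recognition that inserting cube roots of unity is the correct specialization to simultaneously generate the $q^3$-base and the tripling in $i+j+3k$; once this is seen, the computation is routine. The only steps requiring care are the sign bookkeeping in the constant-term extraction, which must collapse to exactly $(-1)^k$ with no spurious $(-1)^{i+j}$ surviving, and the legitimacy of specializing \eqref{eq2.1} to the complex values $\beta_1=\zeta_3 u$ and $\beta_3=\zeta_3^2 u$; the latter is justified since \eqref{eq2.1} holds for arbitrary $\beta_1,\beta_3$ and both sides of \eqref{eq-R-6} are power series in $u$ free of $\zeta_3$.
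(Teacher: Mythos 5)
Your proposal is correct and follows essentially the same route as the paper: the paper likewise specializes \eqref{eq2.1} with $\beta_1=\zeta_3 u$, $\beta_3=\zeta_3^2 u$, rewrites the denominator as $(u^3z^3;q^3)_\infty/(uz;q)_\infty$, expands by \eqref{Euler} and \eqref{Jacobi}, and extracts the constant term with $l=i+j+3k$, the sign collapsing to $(-1)^k$ exactly as you describe. No gaps.
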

\begin{proof}
Setting $\beta_{1}=\zeta_3 u,\beta_{3}=\zeta_3^{2}u$  in \eqref{eq2.1}, we obtain
\begin{align*}
 (q;q)_{\infty}\oint \frac{(u^{2}z,uz,qz,1/z;q)_{\infty}}{(u^{3}z^{^{3}};q^{3})_{\infty}}\frac{dz}{2\pi iz}
 = \frac{(u^{3};q^{3})_{\infty}}{(u;q)_{\infty}}.
\end{align*}
By \eqref{Euler} and \eqref{Jacobi}, we see that its left side is
\begin{align*}
 LHS&=\oint \sum_{i,j,k\geq0}\sum_{l= -\infty}^{\infty}\frac{(-u^{2}z)^{i}q^{(i^{2}-i)/2}  (-uz)^{j}q^{(j^{2}-j)/2}(u^{3}z^{3})^{k} (-1/z)^{l}q^{(l^{2}-l)/2} }{(q;q)_{i}(q;q)_{j}(q^{3};q^{3})_{k}} \frac{dz}{2\pi iz}\\
 &=\sum_{i,j,k\geq0}\frac{(-1)^{k}u^{2i+j+3k}q^{(i^{2}+j^{2}+(i+j+3k)^{2}-2i-2j-3k)/2}}{(q;q)_{i}(q;q)_{j}(q^{3};q^{3})_{k}}.
\end{align*}
This proves \eqref{eq-R-6}.
\end{proof}

Setting $u=q$, $q^{1/3}$, $q^{2/3}$ or $q^{1/2}$ in \eqref{eq-R-6} and replacing $q$ by $q^2$ or $q^3$ when necessary, we obtain
\begin{align}
\sum_{i,j,k\geq 0}\frac{(-1)^{k}q^{(i^{2}+j^{2}+(i+j+3k)^{2}+2i+3k)/2}}{(q;q)_{i}(q;q)_{j}(q^{3};q^{3})_{k}}&=\frac{1}{(q,q^{2};q^{3})_{\infty}}, \\
\sum_{i,j,k\geq 0}\frac{(-1)^{k}q^{3(i^{2}+j^{2}+(i+j+3k)^{2})/2-(2i+4j+3k)/2}}{(q^3;q^3)_{i}(q^3;q^3)_{j}(q^{9};q^{9})_{k}}&=\frac{(q^3;q^{9})_{\infty}}{(q;q^3)_{\infty}}, \\
\sum_{i,j,k\geq 0}\frac{(-1)^{k}q^{3(i^{2}+j^{2}+(i+j+3k)^{2})/2+(2i-2j+3k)/2}}{(q^3;q^3)_{i}(q^3;q^3)_{j}(q^{9};q^{9})_{k}}&= \frac{(q^{6};q^{9})_{\infty}}{(q^{2};q^3)_{\infty}}, \\
\sum_{i,j,k\geq0}\frac{(-1)^{k}q^{i^{2}+j^{2}+(i+j+3k)^{2}-j}}{(q^2;q^2)_{i}(q^2;q^2)_{j}(q^{6};q^{6})_{k}}&= \frac{1}{(q,q^5;q^{6})_{\infty}}.
\end{align}

\subsection{Identities of index $(1,2,2)$}

\begin{theorem}\label{thm-122}
We have
\begin{align}
\sum_{i,j,k\geq0}\frac{(-1)^{j}q^{i+j^{2}+2j+(i+j-k)^{2}}}{(q;q)_{i}(q^{2};q^{2})_{j}(q^{2};q^{2})_{k}}
&=\frac{(q^{2};q^{2})_{\infty}(q^4;q^4)_\infty^2}
 {(q;q)_{\infty}^{2}}, \\
\sum_{i,j,k\geq0}\frac{(-1)^{j}q^{j^{2}+j+k}(q^{(i+j-k)^{2}}+q^{(i+j-k+1)^{2}})}{(q;q)_{i}(q^{2};q^{2})_{j}(q^{2};q^{2})_{k}}
&=\frac{(q^{2};q^{2})_{\infty}^7}
 {(q;q)_{\infty}^{4} (q^4;q^4)_\infty^2}.
\end{align}
\end{theorem}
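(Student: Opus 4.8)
The plan is to follow the integral method used throughout this section (Steps 1 and 2) by finding a suitable infinite-product integrand whose constant term evaluates to an infinite product. The target index is $(1,2,2)$, so I expect to use factors $1/(q;q)_\infty$-type expansions for the $i$-variable (weight $q$), and $q^2$-based expansions for the $j$- and $k$-variables. The appearance of the shift $q^{(i+j-k)^2}+q^{(i+j-k+1)^2}$ in the second identity strongly suggests splitting a Jacobi-triple-product-type theta factor according to the parity of the exponent variable, exactly as was done in the proof of Theorem \ref{thm-J-3}; there, the factor $(1-uz^2)$ times a theta in $z$ produced two shifted theta sums. So my expectation is that both identities will come from a \emph{single} master integral evaluation, with the two right-hand sides arising from two choices of an auxiliary parameter (or from the even/odd decomposition of one theta function).

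First I would look for the right product formula to plug into. The structure $q^{i+j^2+2j+(i+j-k)^2}$ with denominators $(q;q)_i(q^2;q^2)_j(q^2;q^2)_k$ indicates reading off the constant term $[z^0]$ of a product of three generating factors: one Euler factor $\sum_i (\cdots z)^i/(q;q)_i$ in the variable giving the $i$-sum, one $q^2$-Euler factor of the form $\sum_j q^{j^2-j}(\cdots)^j/(q^2;q^2)_j$ (giving the $(-1)^j$ and the $j^2$), and one $q^2$-Euler factor for $k$, together with a theta function (from \eqref{Jacobi}) in $z$ that couples $i+j$ against $k$ to produce the cross term $(i+j-k)^2$. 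Concretely I would try an integrand of the shape
\begin{align*}
\oint \frac{(\text{theta factors in } z)}{(\alpha z;q)_\infty\,(\beta z^2;q^2)_\infty\,(\gamma/z^2;q^2)_\infty}\,\frac{dz}{2\pi iz},
\end{align*}
choosing the theta numerator so that expanding by \eqref{Euler} and \eqref{Jacobi} reproduces the exact exponents above. The matching of $q$-powers and signs fixes all parameters; this bookkeeping is the routine part.

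The substantive step is evaluating that master integral in closed product form. I would aim to match it to one of the ready-made evaluations recalled in Section \ref{sec-pre}: formula \eqref{GR4112}, its extension \eqref{GR4113}, or the Rosengren integrals \eqref{R32}, \eqref{Prop32-proof}. Given that the right-hand sides are pure products (no residual ${}_r\phi_s$), the cleanest route is probably to arrange parameters so that the resulting basic hypergeometric series is summable by the $q$-Gauss \eqref{q-Gauss} or Bailey--Daum \eqref{BD} formula, as was done for Theorems \ref{thm-T11} and \ref{thm-R-3}. For the second identity the parity split should be handled just as in Theorem \ref{thm-J-3}: write the theta factor in $z$ as (even part) plus (odd part), observe that only specific residue classes of the summation index contribute to $[z^0]$, and shift the index by $1$ in one of the two pieces to combine the exponents into $q^{(i+j-k)^2}+q^{(i+j-k+1)^2}$.

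The hard part will be choosing the theta numerator and parameters so that a \emph{single} evaluation yields both product sides simultaneously, and in particular so that the ${}_r\phi_s$ series collapses to a product rather than leaving an unsummable tail. The powers $(q^2;q^2)_\infty^7/((q;q)_\infty^4(q^4;q^4)_\infty^2)$ on the right are somewhat intricate, so verifying that the summation formula outputs exactly this combination — including correctly tracking the $(-q;q)_\infty$ and $(aq,aq^2/b^2;q^2)_\infty$ factors produced by Bailey--Daum — is where errors are most likely to creep in. Once the master integral and the correct specialization are identified, the remaining manipulations (applying \eqref{Euler}, \eqref{Jacobi}, \eqref{int-constant} to read off the sum, then simplifying the product) are mechanical.
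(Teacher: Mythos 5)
Your overall framework (constant-term extraction via \eqref{Euler} and \eqref{Jacobi}, then a closed-form integral evaluation) is the right one, but the plan as written has two concrete problems that would stop it from closing. First, the integrand shape you propose, with denominator factors $(\beta z^{2};q^{2})_{\infty}$ and $(\gamma/z^{2};q^{2})_{\infty}$, cannot reproduce the quadratic form $(i+j-k)^{2}$: powers $z^{2j}$ and $z^{-2k}$ would force a coupling like $(i+2j-2k)^{2}$ after taking $[z^{0}]$. Since the cross term is linear in each of $i,j,k$, all three generating factors must carry $z^{\pm 1}$; moreover the $q^{j^{2}}$ growth forces the $j$-factor to be a \emph{numerator} factor $(\cdot\, z;q^{2})_{\infty}$ (second Euler identity), not a denominator factor. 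The actual integrand in the paper is, for the first identity, $(-qz,q^{3}z,-q/z;q^{2})_{\infty}/\bigl((qz;q)_{\infty}(1/z;q^{2})_{\infty}\bigr)$, where $(qz;q)_{\infty}=(qz,q^{2}z;q^{2})_{\infty}$ supplies the index-$1$ variable inside a base-$q^{2}$ integral.

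Second, and more importantly, the evaluation you reach for ($q$-Gauss \eqref{q-Gauss} or Bailey--Daum \eqref{BD}) is not the one that works here. The integrand above has too many numerator and denominator factors to reduce to a ${}_{2}\phi_{1}$; the paper instead specializes the very-well-poised integral \eqref{GR41010} whose ${}_{4}\phi_{3}$ is summed by the $q$-Dixon formula \eqref{II13}, yielding the master evaluation \eqref{Eq14}. Both identities of Theorem \ref{thm-122} are then the specializations $b=-q/a^{1/2}$ and $b=-q^{1/2}/a^{1/2}$ with $a=q$ (followed by $q\mapsto q^{2}$ and the simplification $(q^{2}z,qz;q^{2})_{\infty}=(qz;q)_{\infty}$). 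You never mention \eqref{GR41010} or \eqref{II13}, so the essential summable structure is missing from your plan; this is the same gap that would have affected your reading of Theorem \ref{thm-T11}, which also rests on \eqref{Eq14} rather than on Bailey--Daum. On the other hand, your instinct about the second identity is sound in spirit: the two terms $q^{(i+j-k)^{2}}+q^{(i+j-k+1)^{2}}$ do come from a degree-one factor in $z$ --- here $(1+q^{1/2}z)=(-q^{1/2}z;q)_{\infty}/(-q^{3/2}z;q)_{\infty}$ surviving a partial cancellation --- which shifts the constant-term constraint by one, exactly as the factor $(1-uz^{2})$ does in Theorem \ref{thm-J-3}; it is not a parity split of the theta function as in Theorems \ref{thm-4112-3} and \ref{thm-123}.
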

\begin{proof}
Let $b=-q/a^{1/2}$ in \eqref{Eq14}. We obtain
\begin{align}
\oint \frac{(-a^{1/2}z,a^{1/2}qz,-q/a^{1/2}z;q)_{\infty}}
{(az,a^{1/2}z,1/z;q)_{\infty}}\frac{dz}{2\pi iz}
=\frac{(-a^{1/2},aq,-q,-q;q)_{\infty}}
 {(a^{1/2},a,-a^{1/2}q,a^{1/2}q;q)_{\infty}}.
\end{align}
When $a=q$, we have
\begin{align*}
(q;q)_{\infty}
\oint \frac{(-q^{1/2}z,q^{3/2}z,-q^{1/2}/z;q)_{\infty}}
{(qz,q^{1/2}z,1/z;q)_{\infty}}\frac{dz}{2\pi iz}
=\frac{(-q^{1/2},q^{2},-q,-q;q)_{\infty}}
 {(q^{1/2},-q^{3/2},q^{3/2};q)_{\infty}}.
\end{align*}
Replacing $q$ by $q^2$, simplifying the denominator of the integrand using
\begin{align}\label{eq-simplify}
(q^2z,qz;q^2)_\infty=(qz;q)_\infty\end{align}
and applying \eqref{Euler} and \eqref{Jacobi}, we obtain the first identity.

Let $b=-q^{1/2}/a^{1/2}$ in \eqref{Eq14}. We obtain
\begin{align}
&\oint \frac{(-a^{1/2}z,a^{1/2}qz,-a^{1/2}q^{1/2}z,-q^{1/2}/a^{1/2}z;q)_{\infty}}
{(az,-a^{1/2}qz,a^{1/2}z,1/z;q)_{\infty}}\frac{dz}{2\pi iz} \nonumber \\
&=\frac{(-a^{1/2},aq,-q^{1/2},-q^{1/2};q)_{\infty}}
 {(a^{1/2},a,-a^{1/2}q,a^{1/2}q;q)_{\infty}}.
\end{align}
When $a=q$, we have
\begin{align*}
(q;q)_{\infty}
\oint (1+q^{1/2}z)\frac{(q^{3/2}z,-qz,-1/z;q)_{\infty}}
{(q^{1/2}z,qz,1/z;q)_{\infty}}\frac{dz}{2\pi iz}
=\frac{(q^{2};q)_{\infty}(-q^{1/2};q)_{\infty}^{3}}
 {(q^{1/2};q)_{\infty}(q^{3};q^{2})_{\infty}}.
\end{align*}
Replacing $q$ by $q^{2}$, simplifying the denominator of the integrand using \eqref{eq-simplify} and applying \eqref{Euler} and \eqref{Jacobi}, we obtain the second identity.
\end{proof}

\subsection{Identities of index $(1,2,3)$}

\begin{theorem}\label{thm-4112-3}
We have
\begin{equation}\label{eq-4112-3}
\sum_{i,j,k\geq0}\frac{(-1)^{i+j}u^{i+3k}q^{(i^{2}-i)/2+(i-2j+3k)^{2}/4}}{(q;q)_{i}(q^{2};q^{2})_{j}(q^{3};q^{3})_{k}}=\frac{(u^{2};q)_{\infty}(q,-u^{2};q^{2})_{\infty}}{(-u^{6};q^{6})_{\infty}}.
\end{equation}
\end{theorem}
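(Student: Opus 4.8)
My plan is to use the two-step integral method, the new ingredient being a Jacobi-triple-product trick to linearize the quarter-square $(i-2j+3k)^2/4$. Applying \eqref{Jacobi} in base $q^{1/2}$ (with $z\mapsto -q^{1/4}z$),
\[
\vartheta(z):=(q^{1/2},-q^{1/4}z,-q^{1/4}/z;q^{1/2})_\infty=\sum_{l=-\infty}^{\infty} q^{l^2/4}z^l,
\]
so that $q^{(i-2j+3k)^2/4}=[z^0]\,z^{-(i-2j+3k)}\vartheta(z)$. Substituting this into the sum side and evaluating the three geometric-type sums by \eqref{Euler} — the $i$-sum through the second identity (which supplies $q^{(i^2-i)/2}$), and the $j$- and $k$-sums through the first identity in bases $q^2$ and $q^3$ — I expect Step 1 to yield, via \eqref{int-constant},
\[
\text{LHS}=\oint \frac{(u/z;q)_\infty\,\vartheta(z)}{(-z^2;q^2)_\infty\,(u^3/z^3;q^3)_\infty}\,\frac{\diff z}{2\pi iz}.
\]

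To prepare for Step 2 I would rewrite the integrand in base $q$ using the root-of-unity factorizations $(u^3/z^3;q^3)_\infty=(u/z,\zeta_3 u/z,\zeta_3^2 u/z;q)_\infty$ and $(-z^2;q^2)_\infty=(\zeta_4 z,\zeta_4^3 z;q)_\infty$; the numerator $(u/z;q)_\infty$ then cancels one denominator factor, leaving the cleaner contour integral
\[
\oint \frac{\vartheta(z)}{(\zeta_4 z,\zeta_4^3 z,\zeta_3 u/z,\zeta_3^2 u/z;q)_\infty}\,\frac{\diff z}{2\pi iz},
\]
with two $z$-factors and two $1/z$-factors in the denominator. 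The evaluation would then proceed by splitting $\vartheta$ according to the parity of $l$: the even part equals the base-$q^2$ theta $(q^2,-qz^2,-q/z^2;q^2)_\infty$ and contributes only integer powers of $q$, whereas the odd part carries an overall factor $q^{1/4}z$ and contributes $q^{1/4}$ times a power series in $q$. Since the target product is a power series in $q$, the odd contribution must integrate to zero — this is precisely the surprising collapse of the apparent $q^{1/4}$-series noted in the introduction — and the even contribution must reproduce the right-hand side. I would verify the vanishing of the odd integral and evaluate the even one by matching it to one of the evaluations of Section \ref{sec-pre} (such as \eqref{GR4112}, \eqref{GR4113}, or Rosengren's \eqref{R32} and \eqref{Prop32-proof}), closing the resulting ${}_r\phi_s$ by \eqref{q-Gauss}, \eqref{BD}, or \eqref{II13}, and finally simplifying to $(u^2;q)_\infty(q,-u^2;q^2)_\infty/(-u^6;q^6)_\infty$.

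The hard part will be this last evaluation. Because $\vartheta$ is intrinsically a base-$q^{1/2}$ theta while the remaining Pochhammer factors live in base $q$, the integrand is genuinely mixed-base and fits no single formula of Section \ref{sec-pre} verbatim; reconciling the bases — equivalently, proving that the half-integer ($q^{1/4}$) part of the theta integrates to zero and that the base-$q^2$ part evaluates to the claimed infinite product — is the technical crux, precisely where the spurious quarter-powers must be shown to disappear.
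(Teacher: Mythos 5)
Your plan is essentially the paper's proof: the paper likewise realizes the quarter-square $q^{(i-2j+3k)^2/4}$ as the constant term of a theta factor against the three Euler-type products, isolates the half-integer-power (odd) contribution and shows it vanishes because, after cancelling the $(-z^2;q^2)_\infty$-type factors, the integrand contains only strictly negative (resp.\ positive) powers of $z$, and evaluates the surviving even part by \eqref{GR4112}. The ``mixed-base'' crux you flag resolves exactly as you hope and requires no ${}_r\phi_s$ summation: after the substitution $z\mapsto u/z$ (which preserves the constant term) your even-part integral matches \eqref{GR4112} verbatim with $c=q^{1/2}$, $a=\zeta_3$, $b=\zeta_3^{2}$, $\alpha=-\beta=\zeta_4 u$, yielding $(u^2;q)_\infty(q,-u^2;q^2)_\infty/(-u^6;q^6)_\infty$ directly.
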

\begin{proof}
Setting $c=q^{1/2}$, replacing $\alpha$ by $\zeta_2\alpha$, setting $\beta=-\zeta_2 \alpha$, $a=d\zeta_3,b=d\zeta_3^{2}$  in \eqref{GR4112}, and then multiplying both sides by $(q^{2};q^{2})_{\infty}$, we see that the left side of \eqref{GR4112} becomes
\begin{align}
 LHS&=(q^{2};q^{2})_{\infty}\oint \frac{(-qz^{2}/\alpha^{2},-q\alpha^{2}/z^{2};q^{2})_{\infty}(dz;q)_{\infty}}
{(d^{3}z^{3};q^{3})_{\infty}(-\alpha^{2}/z^{2};q^{2})_{\infty}}\frac{dz}{2\pi iz} \nonumber \\
 &=\oint \sum_{i,j,k\geq0}\sum_{l= -\infty}^{\infty}\frac{(-dz)^{i}q^{(i^{2}-i)/2} (-\alpha^{2}/z^{2})^{j} (d^{3}z^{3})^{k} (q\alpha^{2}/z^{2})^{l}q^{l^{2}-l}}{(q;q)_{i}(q^{2};q^{2})_{j}q^{3};q^{3})_{k}} \frac{dz}{2\pi iz} \nonumber \\
 &=
 \sum_{i,j,k\geq0}\frac{(-1)^{i+j}\alpha^{i+3k}d^{i+3k}q^{(i^{2}-i)/2+(i-2j+3k)^{2}/4}}{(q;q)_{i}(q^{2};q^{2})_{j}q^{3};q^{3})_{k}}-S, \qquad   \label{eq-S}
\end{align}
where
\begin{align*}
S=& \oint  \sum_{i,j,k\geq 0}\sum_{m= -\infty}^{\infty}\frac{(-dz)^{i}q^{(i^{2}-i)/2} (-\alpha^{2}/z^{2})^{j} (d^{3}z^{3})^{k} }{(q;q)_{i}(q^{2};q^{2})_{j}q^{3};q^{3})_{k}} \\
& \times (q\alpha^{2}/z^{2})^{(2m+1)/2}q^{(2m+1)^{2}/4-(2m+1)/2} \frac{dz}{2\pi iz}
\end{align*}
corresponds to the case when $l=(i-2j+3k)/2$ is not an integer, i.e., $l=(2m+1)/2$ with $m\in \mathbb{Z}$. Now we convert the integrand in the expression of $S$ back to infinite products. We have
\begin{align*}
S&=\alpha q^{1/4}(q^2;q^2)_\infty \oint \frac{(-q^2\alpha^2 /z^{2},-z^2/\alpha^{2};q^{2})_{\infty}(dz;q)_{\infty}}
{(d^{3}z^{3};q^{3})_{\infty}(-\alpha^{2}/z^{2};q^{2})_{\infty}}\frac{dz}{2\pi iz} \\
&=\alpha q^{1/4}(q^2;q^2)_\infty \oint \frac{(-z^2/\alpha^{2};q^{2})_{\infty}(dz;q)_{\infty}}
{(d^{3}z^{3};q^{3})_{\infty}(1+\alpha^2/z^2)}z^{-1}\frac{dz}{2\pi iz} \\
&=\alpha^{-1} q^{1/4}(q^2;q^2)_\infty \oint \frac{(-z^2q^2/\alpha^2;q^2)_\infty (dz;q)_\infty}{(d^3z^3;q^3)_\infty} z \frac{dz}{2\pi iz} \\
&=0.
\end{align*}
Here the last equality follows since
$$[z^0] \frac{(-z^2q^2/\alpha^2;q^2)_\infty (dz;q)_\infty}{(d^3z^3;q^3)_\infty} z=0.$$
Note that the right side of \eqref{GR4112} (after multiplication by $(q^{2};q^{2})_{\infty}$) is
\begin{align}\label{eq-S-2}
 RHS=\frac{(d^{2}\alpha^{2};q)_{\infty}(q,-d^{2}\alpha^{2};q^{2})_{\infty}}{(-d^{6}\alpha^{6};q^{6})_{\infty}}.
\end{align}
Combining \eqref{eq-S} and \eqref{eq-S-2}, replacing $d\alpha$ by $u$, we obtain the desired identity.
\end{proof}

If we set $u$ as $q^{1/2}$ or $q$ in Theorem \ref{thm-4112-3}, we obtain
\begin{align}
  \sum_{i,j,k\geq0}\frac{(-1)^{i+j}q^{(i^{2}+3k)/2+(i-2j+3k)^{2}/4}}{(q;q)_{i}(q^{2};q^{2})_{j}(q^{3};q^{3})_{k}}&=(q;q)_{\infty}(q^{3};q^{6})_{\infty}(q^{2},q^{10};q^{12})_{\infty}, \\
 \sum_{i,j,k\geq0}\frac{(-1)^{i+j}q^{(i^{2}+i+6k)/2+(i-2j+3k)^{2}/4}}{(q;q)_{i}(q^{2};q^{2})_{j}(q^{3};q^{3})_{k}}&= \frac{(q^{2};q)_{\infty}(q;q^{2})_{\infty}}{(q^{2},q^{10};q^{12})_{\infty}}.
\end{align}

\begin{theorem}\label{thm-123}
We have
\begin{align}
\sum_{i,j,k\geq 0}\frac{(-1)^{(i-2j+3k)/2}u^{i+k}q^{(i^{2}-i)/2+(i-2j+3k)^{2}/4}}
{(q;q)_{i}(q^{2};q^{2})_{j}(q^{3};q^{3})_{k}}
=\frac{(q;q^{2})_{\infty}(-u^{2};q^{3})_{\infty}}
 {(u^{2};q^{6})_{\infty}}.
\end{align}
\end{theorem}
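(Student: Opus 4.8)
The plan is to run the same integral method used for Theorem \ref{thm-4112-3}, but anchored on the five–denominator evaluation \eqref{GR4113} instead of \eqref{GR4112}; the extra parameter is precisely what makes the power of $u$ come out as $i+k$ rather than $i+3k$. Concretely, I would specialize \eqref{GR4113} by taking $a=t$, $b=t\zeta_3$, $c=t\zeta_3^2$ (so that $(az,bz,cz;q)_\infty=(t^3z^3;q^3)_\infty$ produces the $(q^3;q^3)_k$ factor with \emph{no} leftover term), $\beta=-\alpha$ (so that $(\alpha/z,\beta/z;q)_\infty=(\alpha^2/z^2;q^2)_\infty$ produces the $(q^2;q^2)_j$ factor), and $\gamma=q^{1/2}\alpha$; then $\delta=abc\alpha\beta=-t^3\alpha^2$ is forced. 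The point of $\gamma=q^{1/2}\alpha$ is that the four remaining numerator factors pair off into clean $q^2$–blocks, $(qz/\gamma;q)_\infty(\gamma z/\alpha\beta;q)_\infty=(qz^2/\alpha^2;q^2)_\infty$ and $(\gamma/z;q)_\infty(q\alpha\beta/\gamma z;q)_\infty=(q\alpha^2/z^2;q^2)_\infty$, while the independent factor $(\delta z;q)_\infty=(-t^3\alpha^2 z;q)_\infty$ is left to generate the $(q;q)_i$ factor. Note that this $i$–factor is now \emph{decoupled} from the cube–root block, which is exactly why the final $u$–exponent is $i+k$ and not $i+3k$.

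After multiplying both sides of \eqref{GR4113} by $(q^2;q^2)_\infty$, I would expand the integrand by \eqref{Euler} and \eqref{Jacobi}: the second identity in \eqref{Euler} turns $(-t^3\alpha^2z;q)_\infty$ into $\sum_i q^{\binom{i}{2}}(t^3\alpha^2z)^i/(q;q)_i$; the first identity expands $1/(t^3z^3;q^3)_\infty$ and $1/(\alpha^2/z^2;q^2)_\infty$; and \eqref{Jacobi} applied to $(q^2,qz^2/\alpha^2,q\alpha^2/z^2;q^2)_\infty$ gives $\sum_l(-1)^l q^{l^2}\alpha^{2l}z^{-2l}$, which supplies both the sign $(-1)^l$ and the exponent $l^2$. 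Extracting the constant term by \eqref{int-constant} forces $i-2j+3k-2l=0$, i.e. $l=(i-2j+3k)/2$, and the monomial collapses to $(t\alpha)^{3(i+k)}$; writing $u=(t\alpha)^3$ then reproduces the summand of the theorem. On the product side I would simplify $(\gamma/\alpha,q\alpha/\gamma,\gamma/\beta,q\beta/\gamma;q)_\infty=(q^{1/2},q^{1/2},-q^{1/2},-q^{1/2};q)_\infty$ to $(q;q^2)_\infty^2$, combine $(\delta/a,\delta/b,\delta/c;q)_\infty$ into $(-t^6\alpha^6;q^3)_\infty$ and $(a\alpha,b\alpha,c\alpha;q)_\infty(a\beta,b\beta,c\beta;q)_\infty$ into $(t^6\alpha^6;q^6)_\infty$, and then use $(q;q)_\infty=(q;q^2)_\infty(q^2;q^2)_\infty$ to reduce the $q$–only factors; with $u^2=(t\alpha)^6$ this should land exactly on $(q;q^2)_\infty(-u^2;q^3)_\infty/(u^2;q^6)_\infty$.

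The genuinely delicate step — the one behind the introduction's remark that the sum is a power series in $q$ and not in $q^{1/4}$ — is that the constant–term extraction only sees integer $l$, i.e. only the terms with $i-2j+3k$ even, whereas the statement sums over all $i,j,k\ge 0$. I would therefore split the left side into its even part (computed by the integral) and its odd part $S$, consisting of the terms with $i-2j+3k$ odd, which individually carry the fractional exponents $(i-2j+3k)^2/4\in\tfrac14+\mathbb Z$ and the imaginary signs. The heart of the proof is to show $S=0$. As with the analogous term in Theorem \ref{thm-4112-3}, I would re-sum the half–integer values $l=m+\tfrac12$ back into an infinite product, pull out the common prefactor proportional to $q^{1/4}\alpha z^{-1}$, and recognize the remaining integrand as a genuine power series in $z$ (a product of factors of the shapes $(\cdot\,z;q)_\infty$, $(\cdot\,z^2;q^2)_\infty$ and $1/(\cdot\,z^3;q^3)_\infty$, all with nonnegative $z$–powers), so that $S$ becomes a constant multiple of $[z^{-1}]$ of such a series and hence vanishes. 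Verifying this cancellation cleanly, together with checking that $\gamma=q^{1/2}\alpha$ really forces the two $q^2$–pairings above, is where I expect essentially all the work to lie; the remaining manipulations are routine uses of $(x;q)_\infty(-x;q)_\infty=(x^2;q^2)_\infty$ and the cube–root factorization $(x;q)_\infty(x\zeta_3;q)_\infty(x\zeta_3^2;q)_\infty=(x^3;q^3)_\infty$.
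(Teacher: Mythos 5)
Your proposal is correct and follows essentially the same route as the paper: the paper also specializes \eqref{GR4113} with $b=\zeta_3 a$, $c=\zeta_3^2 a$, $\beta=-\alpha$, $\gamma=q^{1/2}\alpha$, $\delta=-a^3\alpha^2$, multiplies by $(q^2;q^2)_\infty$, expands via \eqref{Euler} and \eqref{Jacobi}, and sets $u=a^3\alpha^3$. Your treatment of the half-integer contribution $S$ — re-summing $l=m+\tfrac12$ into an infinite product, extracting the $q^{1/4}\alpha z^{-1}$ prefactor, and observing that the residual integrand is a power series in $z$ with no $z^{-1}$ term — is exactly the paper's argument that $S=0$.
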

\begin{proof}
Setting $b=\zeta_3 a,c=\zeta_3^{2}a,\alpha=-\beta$, $\gamma=q^{1/2}\alpha$ and $\delta=-a^3\alpha^2$  in \eqref{GR4113}, after multiplying both sides by $(q^2;q^2)_\infty$, we see that its left side is
\begin{align}
LHS=&(q^{2};q^{2})_{\infty}\oint \frac{(-a^{3}\alpha^{2}z;q)_{\infty}
(qz^{2}/\alpha^{2},q\alpha^{2}/z^{2};q^{2})_{\infty}}
{(a^{3}z^{3};q^{3})_{\infty}(\alpha^{2}/z^{2};q^{2})_{\infty}}\frac{dz}{2\pi iz}  \nonumber \\
&=\oint \sum_{i,j,k\geq 0}\frac{(a^{3}\alpha^{2}z)^{i}q^{(i^2-i)/2}(\alpha^{2}/z^{2})^{j}(a^{3}z^{3})^{k}}{(q;q)_{i}(q^{2};q^{2})_{j}(q^{3};q^{3})_{k}}
\sum_{l= -\infty}^{\infty}(-q\alpha^{2}/z^{2})^{l}q^{l^{2}-l}\frac{dz}{2\pi iz}  \nonumber  \\
&=\sum_{i,j,k\geq 0}\frac{(-1)^{(i-2j+3k)/2}a^{3i+3k}\alpha^{3i+3k}q^{(i^{2}-i)/2+(i-2j+3k)^{2}/4}}
{(q;q)_{i}(q^{2};q^{2})_{j}(q^{3};q^{3})_{k}}-S, \label{add-S}
\end{align}
where
\begin{align*}
S&=\oint \sum_{i,j,k\geq 0}\frac{(a^{3}\alpha^{2}z)^{i}q^{(i^2-i)/2}(\alpha^{2}/z^{2})^{j}(a^{3}z^{3})^{k}}{(q;q)_{i}(q^{2};q^{2})_{j}(q^{3};q^{3})_{k}} \nonumber \\
&\quad \times \sum_{m= -\infty}^{\infty}(-q\alpha^{2}/z^{2})^{(2m+1)/2}q^{(2m+1)^{2}/4-(2m+1)/2}\frac{dz}{2\pi iz}  \\
&=\zeta_2\alpha q^{1/4}(q^2;q^2)_\infty
\oint\frac{(-a^{3}\alpha^{2}z;q)_{\infty}
(z^{2}/\alpha^{2},q^{2}\alpha^{2}/z^{2};q^{2})_{\infty}}
{(a^{3}z^{3};q^{3})_{\infty}(\alpha^{2}/z^{2};q^{2})_{\infty}} z^{-1}\frac{dz}{2\pi iz}  \\
&=-\zeta_2\alpha^{-1} q^{1/4}
\oint \frac{(-a^{3}\alpha^{2}z;q)_{\infty}
(q^{2}z^{2}/\alpha^{2};q^{2})_{\infty}}
{(a^{3}z^{3};q^{3})_{\infty}}z\frac{dz}{2\pi iz}  \\
&=0.
\end{align*}
The right side of \eqref{GR4113} (after multiplication by $(q^2;q^2)_\infty$) is
\begin{align}
RHS=\frac{(q;q^{2})_{\infty}(-a^{6}\alpha^{6};q^{3})_{\infty}}
 {(a^{6}\alpha^{6};q^{6})_{\infty}}. \label{add-S-2}
\end{align}
Combining \eqref{add-S} and \eqref{add-S-2}, and replacing $a^3\alpha^3$ by $u$, we obtain the desired identity.
\end{proof}

Setting $u$ as $-\zeta_2 q^{3/2}$  and $q^{3/2}$  in Theorem \ref{thm-123}, we obtain
\begin{align}
\sum_{i,j,k\geq0}\frac{(-1)^{i+j}q^{(i^{2}+2i+3k)/2+(i-2j+3k)^{2}/4}}
{(q;q)_{i}(q^{2};q^{2})_{j}(q^{3};q^{3})_{k}}
&=(q;q^{2})_{\infty}(q^{3};q^{6})_{\infty}^{2}(q^{12};q^{12})_{\infty}, \\
\sum_{i,j,k\geq0}\frac{(-1)^{(i-2j+3k)/2}q^{(i^{2}+2i+3k)/2+(i-2j+3k)^{2}/4}}
{(q;q)_{i}(q^{2};q^{2})_{j}(q^{3};q^{3})_{k}}
&=\frac{(q,q^{5};q^{6})_{\infty}}{(q^{3};q^{6})_{\infty}}.
\end{align}

 \subsection{Identities of index $(1,2,4)$}
\begin{theorem}
We have
\begin{align}
\sum_{i,j,k\geq 0} \frac{(-1)^{k}q^{(i+j+2k)(i+j+2k-1)+j+2k^2}u^{i+2j+4k}}{(q;q)_{i}(q^2;q^2)_{j}(q^4;q^4)_{k}}&=(-u;q)_\infty, \\
\sum_{i,j,k\geq 0}\frac{(-1)^{j}q^{(i+j+2k)(i+j+2k-1)+2i+3j+2k^2+6k}}{(q;q)_{i}(q^2;q^2)_{j}(q^4;q^4)_{k}}&=\frac{(q^4,q^{12},q^{16};q^{16})_\infty}{(q^2;q^2)_\infty}, \\
\sum_{i,j,k\geq 0}\frac{(-1)^{j}q^{(i+j+2k)^2+2k^2}}{(q;q)_{i}(q^2;q^2)_{j}(q^4;q^4)_{k}}&=\frac{(q^8;q^8)_\infty^2}{(q^2;q^2)_\infty (q^{16};q^{16})_\infty}.
\end{align}
\end{theorem}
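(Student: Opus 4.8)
The plan is to prove all three identities by the two-step integral method of Section~\ref{sec-pre}, in the same spirit as Theorems~\ref{thm-R-1}--\ref{thm-123}. The decisive structural observation is that in every left-hand side the variables $i,j,k$ enter the $q$-exponent only through the single combined quantity $n=i+j+2k$ (through $q^{n(n-1)}$ in the first two identities and through $q^{n^2}$ in the third), together with a leftover $q^{2k^2}$ and a linear term. This tells me exactly which integrand to write down. I would attach to $n$ a base-$q^2$ Jacobi theta weighted by $z^{-l}$, namely $(q^2,-1/z,-q^2z;q^2)_\infty=\sum_{l}q^{l^2-l}z^{-l}$ for the first two identities and $(q^2,-q/z,-qz;q^2)_\infty=\sum_{l}q^{l^2}z^{-l}$ for the third; an $i$-factor expanded by the first identity in \eqref{Euler} in base $q$ (producing $(q;q)_i$ and $z^i$); a $j$-factor expanded by \eqref{Euler} in base $q^2$ (producing $(q^2;q^2)_j$ and $z^j$); and a $k$-factor of the form $(\pm q^{m}z^2;q^4)_\infty$ expanded by the second identity in \eqref{Euler} in base $q^4$ (producing $(q^4;q^4)_k$, $z^{2k}$ and, crucially, the residual $q^{2k^2}$). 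The various linear terms and the signs $(-1)^j$ or $(-1)^k$ are encoded in the parameters and signs of these three elementary factors. Extracting the constant term via \eqref{int-constant} forces $l=i+j+2k$, at which point the mixed terms $2ij,4ik,4jk$ inside $(i+j+2k)^2$ are reproduced automatically, so each triple sum is recovered; this completes Step 1 uniformly.

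For the first identity the sign $(-1)^k$ makes Step 2 painless. The correct integrand turns out to be $\dfrac{(q^2,-1/z,-q^2z;q^2)_\infty\,(q^2u^4z^2;q^4)_\infty}{(uz;q)_\infty\,(qu^2z;q^2)_\infty}$, and here the $k$-factor telescopes: $(q^2u^4z^2;q^4)_\infty=(qu^2z;q^2)_\infty(-qu^2z;q^2)_\infty$, so it cancels the denominator factor $(qu^2z;q^2)_\infty$ coming from the $j$-sum. Writing also $(uz;q)_\infty=(uz;q^2)_\infty(uqz;q^2)_\infty$ turns the whole integrand into a ratio of base-$q^2$ products, and since the constant term is unchanged under $z\mapsto-z$, the integral is brought into the exact shape of \eqref{eq2.1} in base $q^2$ with $\beta_1=-u$, $\beta_3=-uq$ (so $\beta_1\beta_3=qu^2$). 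Formula \eqref{eq2.1} then evaluates it to $(-u,-uq;q^2)_\infty=(-u;q)_\infty$, which is precisely the claimed right-hand side.

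The second and third identities carry the sign $(-1)^j$ instead of $(-1)^k$, and this is where the genuine difficulty lies. With no $(-1)^k$ available, reproducing the residual $q^{2k^2}$ forces the $k$-factor to be $(-q^{m}z^2;q^4)_\infty$, with a \emph{plus} sign inside the Pochhammer, and the difference-of-squares factorization $(A;q^2)_\infty(-A;q^2)_\infty=(A^2;q^4)_\infty$ that collapsed the first identity onto \eqref{eq2.1} no longer applies (it would require an imaginary $A$). My plan for these two is to treat them as a pair coming from a single base integral under two specializations of one free parameter, exactly as the two identities of Theorem~\ref{thm-122} are extracted from \eqref{Eq14} and \eqref{Eq15}. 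The two right-hand sides are both modulus-$16$ theta quotients, $\dfrac{(q^4,q^{12},q^{16};q^{16})_\infty}{(q^2;q^2)_\infty}$ and $\dfrac{(q^8,q^8,q^{16};q^{16})_\infty}{(q^2;q^2)_\infty}$ (the latter being the stated right-hand side after the rewriting $(q^8;q^8)_\infty^2/(q^{16};q^{16})_\infty=(q^8,q^8,q^{16};q^{16})_\infty$), which is consistent with a $q$-Dixon/Bailey--Daum evaluation of the type behind \eqref{Eq14}: I would look for a base integral whose closed form, via \eqref{II13} or \eqref{BD}, is already such a theta, the two sign choices producing the two products.

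The main obstacle is thus entirely in Step 2 for the second and third identities: the non-telescoping factor $(-q^{m}z^2;q^4)_\infty$ prevents a direct appeal to \eqref{eq2.1}, and one must instead either locate the right base integral whose right-hand side is a modulus-$16$ theta, or expand $(-q^{m}z^2;q^4)_\infty$ and resum the resulting double theta series by a quadratic (modulus-doubling) transformation. I expect that once the correct contour integral is identified and evaluated, reading off the two stated products will be routine, just as in the first identity.
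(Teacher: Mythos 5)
Your Step 1 is the paper's Step 1, and your proof of the first identity is complete and correct: the paper writes the generating function
\begin{align*}
H(u,v,w)=(q^2;q^2)_\infty\oint\frac{(wz^2;q^4)_\infty(-1/z,-q^2z;q^2)_\infty}{(uz;q)_\infty(vz;q^2)_\infty}\frac{dz}{2\pi iz},
\end{align*}
factors $(wz^2;q^4)_\infty=(w^{1/2}z,-w^{1/2}z;q^2)_\infty$ and $(uz;q)_\infty=(uz,uqz;q^2)_\infty$, applies \eqref{R32} when $w=u^2vq$, and for $(u,v,w)=(u,u^2q,u^4q^2)$ finishes with the $q$-Gauss sum \eqref{q-Gauss}. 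Your cancellation $(q^2u^4z^2;q^4)_\infty=(qu^2z,-qu^2z;q^2)_\infty$ followed by \eqref{eq2.1} is exactly this computation in the degenerate case $\alpha_2=\beta_1$, so that part is fine.

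The genuine gap is in the second and third identities, which you do not prove: you describe a search for ``the right base integral'' and say you expect the evaluation to be routine once it is found, but you never exhibit the integral or carry out the evaluation. Worse, you explicitly rule out the one step that makes the argument go through, by claiming that the factorization $(A;q^2)_\infty(-A;q^2)_\infty=(A^2;q^4)_\infty$ ``no longer applies (it would require an imaginary $A$)'' when the $k$-factor is $(-q^mz^2;q^4)_\infty$. An imaginary $A$ is perfectly admissible here: \eqref{R32} and the Bailey--Daum sum \eqref{BD} are identities in complex parameters, and the paper simply writes $(-q^mz^2;q^4)_\infty=(\zeta q^{m/2}z,-\zeta q^{m/2}z;q^2)_\infty$ with $\zeta^2=-1$. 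It then keeps the \emph{same} specialization $w=u^2vq$ of the \emph{same} integral $H(u,v,w)$, obtaining $(-v,-w^{1/2}/v;q^2)_\infty\,{}_2\phi_1\bigl(w^{1/2}/u,\,w^{1/2}/(uq);\,-v;\,q^2,\,-w^{1/2}/v\bigr)$, and evaluates it at $(u,v,w)=(q^2,-q^3,-q^8)$ and $(q,-q,-q^4)$; both land exactly in Bailey--Daum form, and the quadratic-base products $(aq^2,aq^4/b^2;q^4)_\infty$ collapse to $(-q^8;q^8)_\infty$ and $(-q^4;q^8)_\infty$ respectively because the two imaginary parameters are negatives of one another, which is precisely the source of the modulus-$16$ products on the right-hand sides. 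So no new base integral is needed; the obstacle you identify dissolves once complex square roots are allowed, and without that step (or a worked alternative) your write-up leaves identities two and three unproved.
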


\begin{proof}
Let
\begin{align}
&H(u,v,w)=H(u,v,w;q) \nonumber \\
&:=\sum_{i,j,k\geq 0}\frac{u^{i}v^{j}(-w)^{k}q^{(i+j+2k)(i+j+2k-1)+2k(k-1)}}{(q;q)_{i}(q^2;q^2)_{j}(q^4;q^4)_{k}}.
\end{align}
We have by \eqref{int-constant} that
\begin{align*}
H(u,v,w)=\oint\sum_{i=0}^\infty \frac{(uz)^{i}}{(q;q)_{i}}\sum_{j=0}^\infty \frac{(vz)^{j}}{(q^2;q^2)_{j}}\sum_{k=0}^\infty \frac{(-wz^2)^{k}q^{2k(k-1)}}{(q^4;q^4)_{k}}\sum_{l=-\infty}^\infty (1/z)^{l}q^{l(l-1)}  \frac{\diff z}{2\pi iz}.
\end{align*}
Hence by \eqref{Euler} and \eqref{Jacobi},
\begin{align*}
H(u,v,w)&=(q^2;q^2)_\infty \oint \frac{(wz^2;q^4)_\infty (-1/z,-q^2z;q^2)_\infty}{(uz;q)_\infty (vz;q^2)_\infty} \frac{\diff z}{2\pi iz} \\
&=(q^2;q^2)_\infty \oint \frac{(w^{1/2}z,-w^{1/2}z,-1/z,-q^2z;q^2)_\infty}{(uz,uzq,vz;q^2)_\infty} \frac{\diff z}{2\pi iz} \\
&=(q^2;q^2)_\infty \oint \frac{(w^{1/2}z,-w^{1/2}z,1/z,q^2z;q^2)_\infty}{(-uz,-uzq,-vz;q^2)_\infty} \frac{\diff z}{2\pi iz}.
\end{align*}
Here for the last line we have replaced $z$ by $-z$. When $w=u^2vq$, we can apply \eqref{R32} with $(\alpha_1,\alpha_2,\beta_1,\beta_2,\beta_3)=(w^{1/2},-w^{1/2},-v,-u,-uq)$ to deduce that
\begin{align}\label{124F-exp}
H(u,v,w;q)=(-v,-w^{1/2}/v;q^2)_\infty \cdot {}_2\phi_1 \bigg(\genfrac{}{}{0pt}{}{w^{1/2}/u,w^{1/2}/uq}{-v};q^2,-w^{1/2}/v \bigg).
\end{align}

We now specialize the choices of $(u,v,w)$ so that the ${}_2\phi_1$ series becomes a nice infinite product.

By \eqref{124F-exp} and the $q$-Gauss summation \eqref{q-Gauss} ,we have
\begin{align*}
H(u,u^2q,u^4q^2)&=(-u^2q,-1;q^2)_\infty \cdot {}_2\phi_1 \bigg(\genfrac{}{}{0pt}{}{uq,u}{-u^2q};q^2,-1  \bigg) \\
&=(-u^2q,-1;q^2)_\infty \cdot \frac{(-u,-uq;q^2)_\infty}{(-u^2q,-1;q^2)_\infty} \\
&=(-u;q)_\infty.
\end{align*}
By \eqref{124F-exp} and  the Bailey-Daum summation \eqref{BD}, we get
\begin{align*}
H(q^2,-q^3,-q^8)&=(q^3,\zeta_2 q;q^2)_\infty \cdot {}_2\phi_1 \bigg(\genfrac{}{}{0pt}{}{\zeta_2 q^2,\zeta_2q}{q^3};q^2,\zeta_2 q  \bigg) \\
&=(q^3,\zeta_2 q;q^2)_\infty \cdot \frac{(-q^2;q^2)_\infty (\zeta_2q^4,-\zeta_2q^4;q^4)_\infty}{(q^3,\zeta_2q;q^2)_\infty} \\
&=\frac{(q^4,q^{12},q^{16};q^{16})_\infty}{(q^2;q^2)_\infty}.
\end{align*}
Similarly, by \eqref{124F-exp} and \eqref{BD} we get
\begin{align*}
H(q,-q,-q^4)&=(q,\zeta_2q;q^2)_\infty  \cdot {}_2\phi_1\bigg(\genfrac{}{}{0pt}{}{\zeta_2,\zeta_2q}{q};q^2,\zeta_2q  \bigg) \\
&=(q,\zeta_2q;q^2)_\infty \cdot \frac{(-q^2;q^2)_\infty (\zeta_2q^2,-\zeta_2q^2;q^4)_\infty}{(q,\zeta_2q;q^2)_\infty} \\
&=\frac{(q^8;q^8)_\infty^2}{(q^2;q^2)_\infty(q^{16};q^{16})_\infty}. \qedhere
\end{align*}
\end{proof}

\section{Concluding Remarks}\label{sec-concluding}

We give several remarks before closing this paper.

First, though here we only discuss identities involving double sums and triple sums, the integral method can also be applied to deduce identities with more summation indexes. For example, we can use the integral method to give a new proof to the identity \eqref{DL1112}, which is of index $(1,1,1,2)$.
\begin{proof}[Proof of \eqref{DL1112}]
Using (\ref{Euler}), (\ref{Jacobi}) and (\ref{int-constant}), we have
\begin{align} \label{Lovejoy1}
   & \sum_{i,j,k,l\geq 0} \frac{a^{i+l}b^{j+l}q^{\binom{i+j+k+2l+1}{2}+\binom{i+1}{2}+\binom{j+1}{2}+l}}{(q;q)_i(q;q)_j(q;q)_k(q^2;q^2)_l}   \nonumber\\
    &=(q;q)_\infty \oint \frac{(-aqz,-bqz,-z,-q/z;q)_{\infty}}
{(z;q)_{\infty}(abqz^{2};q^{2})_{\infty}}\frac{dz}{2\pi iz}   \nonumber\\
&=(q;q)_\infty \oint \frac{(-aqz,-bqz,-z,-q/z;q)_{\infty}}
{(z,(abq)^{1/2}z,-(abq)^{1/2}z;q)_{\infty}}\frac{dz}{2\pi iz}.
 \end{align}

 Using (\ref{q-binomial}), we have
 \begin{align} \label{bi1}
 \frac{(-bdq^{n+1};q)_\infty }{(dq^{n};q)_\infty }
 =\sum_{m\geq 0} \frac{(-bq;q)_m}{(q;q)_m}(dq^{n})^{m}.
 \end{align}

 Using Heine's transformations of ${}_2\phi_1$ series \cite[(\uppercase\expandafter{\romannumeral3}. 3)]{GR-book}
 \begin{align}
 {}_2\phi_1\bigg(\genfrac{}{}{0pt}{}{a,b}{c};q,z  \bigg)=\frac{(abz/c;q)_\infty}{(z;q)_\infty}{}_2\phi_1\bigg(\genfrac{}{}{0pt}{}{c/a,c/b}{c};q,abz/c  \bigg),
 \end{align}
we deduce that
\begin{align} \label{he1}
&\sum_{n\geq 0} \frac{((abq)^{1/2}d,-(abq)^{1/2}d;q)_n}{(q,-adq;q)_n}(-q^{m+1}/d)^{n}  \nonumber \\
&=\frac{(-bq^{m+1};q)_\infty }{(-q^{m+1}/d;q)_\infty }\sum_{n\geq 0} \frac{(-(aq/b)^{1/2},(aq/b)^{1/2};q)_n}{(q,-adq;q)_n}(-bq^{m+1})^{n}.
\end{align}

By (\ref{GR41010}), we have
\begin{align} \label{Lovejoy2}
&(q;q)_\infty\oint \frac{(-aqz,-bqz,-z,-q/z;q)_{\infty}}
{(z,(abq)^{1/2}z,-(abq)^{1/2}z,d/z;q)_{\infty}}\frac{dz}{2\pi iz} \nonumber \\
&=\frac{(-adq,-bdq,-d,-q/d;q)_{\infty}}
{(d,(abq)^{1/2}d,-(abq)^{1/2}d;q)_{\infty}}
\sum_{n\geq 0} \frac{(d,(abq)^{1/2}d,-(abq)^{1/2}d;q)_n}{(q,-adq,-bdq;q)_n}(-q/d)^{n}  \nonumber \\
&=\frac{(-adq,-d,-q/d;q)_{\infty}}
{((abq)^{1/2}d,-(abq)^{1/2}d;q)_{\infty}}
\sum_{n\geq 0} \frac{((abq)^{1/2}d,-(abq)^{1/2}d;q)_n}{(q,-adq;q)_n}(-q/d)^{n}
\frac{(-bdq^{n+1};q)_{\infty}}
{(dq^{n};q)_{\infty}}  \nonumber \\
&=\frac{(-adq,-d,-q/d;q)_{\infty}}
{((abq)^{1/2}d,-(abq)^{1/2}d;q)_{\infty}}
\sum_{m\geq 0} \frac{(-bq;q)_m}{(q;q)_m}d^{m} \nonumber \\
&\quad \quad \quad \quad \quad \times \sum_{n\geq 0} \frac{((abq)^{1/2}d,-(abq)^{1/2}d;q)_n}{(q,-adq;q)_n}(-q^{m+1}/d)^{n} \quad \text{(by  \ref{bi1})} \nonumber \\
&= \frac{(-adq,-d,-q/d;q)_{\infty}}
{((abq)^{1/2}d,-(abq)^{1/2}d;q)_{\infty}}
\sum_{n\geq 0} \frac{(-(aq/b)^{1/2},(aq/b)^{1/2};q)_n}{(q,-adq;q)_n}(-bq)^{n}   \nonumber \\
&\quad \quad \quad \times \sum_{m\geq 0} \frac{(-bq;q)_m}{(q;q)_m}(dq^{n})^{m}
\frac{(-bq^{m+1};q)_\infty }{(-q^{m+1}/d;q)_\infty } \quad \text{(by  \ref{he1})} \nonumber \\
&=\frac{(-adq,-d,-bq;q)_{\infty}}
{((abq)^{1/2}d,-(abq)^{1/2}d;q)_{\infty}}
\sum_{n\geq 0} \frac{(-(aq/b)^{1/2},(aq/b)^{1/2};q)_n}{(q,-adq;q)_n}(-bq)^{n} \nonumber \\
&\qquad \qquad \qquad \times \sum_{m\geq 0} \frac{(-q/d;q)_m}{(q;q)_m}(dq^{n})^{m} \nonumber \\
&=\frac{(-adq,-d,-bq;q)_{\infty}}
{((abq)^{1/2}d,-(abq)^{1/2}d;q)_{\infty}}
\sum_{n\geq 0} \frac{(aq/b;q^{2})_n}{(q,-adq;q)_n}(-bq)^{n}
\frac{(-q^{n+1};q)_\infty }{(dq^{n};q)_\infty } \nonumber \\
&=\frac{(-adq,-d,-bq,-q;q)_{\infty}}
{((abq)^{1/2}d,-(abq)^{1/2}d;q)_{\infty}}
\sum_{n\geq 0} \frac{(aq/b;q^{2})_n}{(q^{2};q^{2})_n(-adq;q)_n}(-bq)^{n}
\frac{1}{(dq^{n};q)_\infty } .
\end{align}

Letting $d\rightarrow 0$ on both sides of (\ref{Lovejoy2}), we have
\begin{align}\label{Lovejoy3}
&(q;q)_\infty \oint \frac{(-aqz,-bqz,-z,-q/z;q)_{\infty}}
{(z,(abq)^{1/2}z,-(abq)^{1/2}z;q)_{\infty}}\frac{dz}{2\pi iz}
\nonumber \\
&=(-bq,-q;q)_{\infty}\sum_{n\geq 0} \frac{(aq/b;q^{2})_n}{(q^{2};q^{2})_n}(-bq)^{n}\nonumber \\
&= \frac{(-bq,-q;q)_{\infty}(-aq^{2};q^{2})_{\infty}}{(-bq;q^{2})_{\infty}} \nonumber \\
&=(-q;q)_{\infty}(-aq^{2},-bq^{2};q^{2})_{\infty}.
\end{align}
Combining (\ref{Lovejoy1}) and (\ref{Lovejoy3}), we obtain (\ref{DL1112}).
\end{proof}

Second, there might exist partition or Lie theoretic interpretations for the identities we proved. This deserves further investigation. As promised in Remark \ref{rem-111}, we give below a brief discussion on partition interpretations of the identity \eqref{eq-111}.  Here we follow closely the lines in \cite{Lovejoy2017}.

Recall that a partition $\pi$ of $n$ is a nonincreasing sequence $\pi=(\lambda_1,\lambda_2,\dots,\lambda_s)$ of positive integers which sum up to $n$, i.e.,
$$n=\lambda_1+\lambda_2+\cdots+\lambda_s, \quad \lambda_1\geq \lambda_2\geq \cdots \geq \lambda_s\geq 1.$$
Let $T(u,v,n)$ be the number of bipartitions $(\pi_1,\pi_2)$ of $n$ such that the partition $\pi_1$ (resp.\ $\pi_2$) consists of $u$ (resp.\ $v$) distinct parts, respectively.  Then clearly, we have
\begin{align}\label{T-gen}
\sum_{n=0}^\infty T(u,v,n)a^ub^vq^n=(-aq,-bq;q)_\infty.
\end{align}
To generalize and refine Schur's partition theorem, Alladi and Gordon \cite{Alladi-Gordon-1993,Alladi-Gordon-1995} introduced a new kind of three colored partitions. As in \cite{Lovejoy2017},
we color the positive integers by three colors $a,b$ and $ab$ with the order that
$$ab<a<b.$$
Now the integers are ordered as
$$1_{ab}<1_a<1_b<2_{ab}<2_a<2_b<\cdots.$$
Let $S(u,v,n)$ be the number of three-colored partitions of $n$ with no parts $1_{ab}$, $u$ parts colored $a$ or $ab$, $v$ parts colored $b$ or $ab$, and satisfying the difference conditions described in the matrix
$$A=\bordermatrix{%
& a & b & ab  \cr
a & 1 & 2 &1 \cr
b & 1 & 1 &1 \cr
ab & 2 & 2 & 2
}.$$
Here the entry $(x,y)$ gives the minimal difference between the parts $\lambda_i$ of color $x$ and $\lambda_{i+1}$ of color $y$.  Alladi and Gordon proved that
\begin{align}\label{Alladi-Gordon-eq}
    \sum_{u,v,n\geq 0} S(u,v,n)a^ub^vq^n=(-aq,-bq;q)_\infty.
\end{align}

Through combinatorial arguments, Lovejoy \cite[Eq.\ (2.3)]{Lovejoy2017} proved that
\begin{align}\label{eq-Lovejoy-gen}
    \sum_{u,v,n\geq 0} S(u,v,n)a^ub^vq^n=\sum_{i,j,k\geq 0}\frac{a^iq^i}{(q;q)_i} \frac{b^jq^j}{(q;q)_j}\frac{(ab)^kq^kq^{\binom{k+1}{2}}}{(q;q)_k}q^{\binom{i+j+k}{2}}.
\end{align}
After simplifying the sums by the $q$-Chu-Vandermonde summation, Euler's identities \eqref{Euler} and the $q$-binomial identity \eqref{q-binomial}, Lovejoy obtained \eqref{Alladi-Gordon-eq}. Clearly, combining \eqref{Alladi-Gordon-eq} and \eqref{eq-Lovejoy-gen}, we get \eqref{eq-111}. Together with \eqref{T-gen}, we see that \eqref{eq-111} is equivalent to the partition identity
$$S(u,v,n)=T(u,v,n).$$
Once we convert our identities to partition identities like the above one, it will be quite interesting to find bijective proofs for them.

Finally, we want to emphasize on the advantage of the integral method. It allows us to prove the identities in this paper in a uniform manner. Of course, it is possible to give different proofs to our theorems. As discussed in Remarks \ref{rem-sec3}--\ref{rem-111}, one may prove some of the theorems using approaches such as summing over one of the index first. Compared with the other methods, the integral method has the advantage that it tells us how the identities are constructed and the calculations involved are streamlined.

\subsection*{Acknowledgements}
We thank Jeremy Lovejoy for some valuable comments, especially for bringing the works \cite{Dousse-Lovejoy,Lovejoy2006,Lovejoy2017} to our attention. We are also grateful to Chuanan Wei for helpful comments on the presentation of Corollaries \ref{cor-Jacobi-add-1} and \ref{cor-J-4}.
This work was supported by the National Natural Science Foundation of China (12171375).


\begin{thebibliography}{0}


\bibitem{Alladi-Gordon-1993}  K. Alladi and B. Gordon, Generalizations of Schur's partition theorem, Manuscripta Math. 79 (1993), 113--126.

\bibitem{Alladi-Gordon-1995} K. Alladi and B. Gordon, Schur's partition theorem, companions, refinements and generalizations,
Trans. Am. Math. Soc. 347 (1995), 1591--1608.

\bibitem{Andrews1974} G.E. Andrews, On the General Rogers-Ramanujan Theorem. Providence, RI: Amer. Math. Soc., 1974.


\bibitem{Andrews2019} G.E. Andrews, Sequences in partitions, double $q$-series and the mock theta function $\rho_3(q)$, Algorithmic
Combinatorics: Enumerative Combinatorics, Special Functions and Computer Algebra, Springer, Cham. Switzerland,
(2020), pp. 25--46.

\bibitem{Lost2} G.E. Andrews and B.C. Berndt, Ramanujan's Lost Notebook, Part II, Springer 2009.



\bibitem{Andrews-Uncu} G.E. Andrews and A.K. Uncu, Sequences in overpartitions, arXiv:2111.15003v1.


\bibitem{Berkovich} A. Berkovich and A.K. Uncu, Elementary polynomial identities involving $q$-trinomial coefficients, Ann. Comb. 23 (2019), no. 3--4, 549--560.

\bibitem{Bressoud1979} D.M. Bressoud, A generalization of the Rogers-Ramanujan identities for all moduli, J. Combin. Theory Ser. A 27 (1979), 64--68.

\bibitem{Bressoud1980} D.M. Bressoud, Analytic and combinatorial generalizations of the Rogers-Ramanujan identities, Mem. Amer. Math. Soc. 24 (1980).

\bibitem{BSM} K. Bringmann, C. Jennings-Shaffer and K. Mahlburg, Proofs and reductions of various conjectured partition identities of Kanade and Russell, J. Reine Angew. Math. 766 (2020), 109--135.

\bibitem{Capparelli}  S. Capparelli, On some representations of twisted affine Lie algebras and combinatorial identities. J.
Algebra 154 (1993), 335--355.

\bibitem{Chern} S. Chern, Asymmetric Rogers-Ramanujan type identities. I, The Andrews-Uncu Conjecture, 	Proc. Amer. Math. Soc. (2022), https://doi.org/10.1090/proc/16332.

\bibitem{Dousse-Lovejoy} J. Dousse and J. Lovejoy, Generalizations of Capparelli's identity, Bull. London Math. Soc. 51 (2019), 193--206.

\bibitem{GR-book} G. Gasper and M. Rahman, Basic Hypergeometric Series, 2nd Edition, Encyclopedia of Mathematics and Its Applications, Vol.\ 96, Cambridge University Press, 2004.

\bibitem{Gordon1961} B. Gordon, A combinatorial generalization of the Rogers-Ramanujan identities, Amer. J. Math. 83 (1961), 393--399.




\bibitem{KR-2015} S. Kanade and M.C. Russell, IdentityFinder and some new identities of Rogers-Ramanujan type, Exp. Math. 24 (2015), no. 4, 419--423.

\bibitem{KR-2019} S. Kanade and M.C. Russell, Staircases to analytic sum-sides for many new integer partition identities of Rogers-Ramanujan type. Electron. J. Combin. 26 (2019), 1--6.


\bibitem{Kursungoz} K. Kur\c{s}ung\"{o}z, Andrews-Gordon type series for Capparelli's and G\"{o}llnitz-Gordon identities, J. Combin. Theory Ser. A 165 (2019) 117-138.

\bibitem{Kursungoz-AnnComb} K. Kur\c{s}ung\"{o}z, Andrews-Gordon type series for Kanade-Russell conjectures, Ann. Comb. 23 (2019),  835--888.


\bibitem{Laughlin} J. Mc Laughlin, Some more identities of Kanade-Russell type derived using Rosengren's method, Ann. Comb., https://doi.org/10.1007/s00026-022-00586-3.

\bibitem{Lepowsky-Wilson} J. Lepowsky and R.L. Wilson, A Lie theoretic interpretation and proof of the Rogers-Ramanujan identities, Adv. Math. 45 (1982), 21--72.

\bibitem{Lepowsky-Wilson-1985} J. Lepowsky and R.L. Wilson, The structure of standard modules II. the case $A_1^{(1)}$, principal gradation, Invent. Math. 79 (1985), 417--442.

\bibitem{Lovejoy2006} J. Lovejoy, Constant terms, jagged partitions, and partitions with difference two at distance two, Aequationes Math. 72 (2006), 299--312.

\bibitem{Lovejoy2017} J. Lovejoy, Asymmetric generalizations of Schur's theorem, Analytic Number Theory, Modular Forms and $q$-Hypergeometric  Series, 463--476, Springer Proc. Math. Stat. 221,
Springer, Cham, 2017.

\bibitem{Rosengren} H. Rosengren, Proofs of some partition identities conjectured by Kanade and Russell, Ramanujan J. https://doi.org/10.1007/s11139-021-00389-9


\bibitem{Sills-book} A.V. Sills, An Invitation to the Rogers-Ramanujan Identities, CRC Press (2018).


\bibitem{Slater} L.J. Slater, Further identities of the Rogers-Ramanujan type, Proc. Lond. Math. Soc. (2) 54 (1)
(1952), 147--167.

\bibitem{Takigiku-2019} M. Takigiku and S. Tsuchioka, A proof of conjectured partition identities of Nandi, arXiv:1910.12461.

\bibitem{Takigiku} M. Takigiku and S. Tsuchioka, Andrews--Gordon type series for the level 5 and 7 standard modules of the affine Lie algebra $A_2^{(2)}$, Proc. Amer. Math. Soc. 149 (2021), no. 7, 2763--2776.

\bibitem{Uncu-Zudilin} A. Uncu and W. Zudilin, Reflecting (on) the modulo 9 Kanade-Russell (conjectural) identities, arXiv: 2106.02959v3.

\bibitem{Wang} L. Wang, New proofs of some double sum Rogers-Ramanujan type identities, Ramanujan J. (2022), https://doi.org/10.1007/s11139-022-00654-5.
\end{thebibliography}
\end{document}